\numberwithin{equation}{section}
\newtheorem{theorem}{Theorem}[section]
\newtheorem{defi}[theorem]{Definition}
\newtheorem{corollary}[theorem]{Corollary}
\newtheorem{prop}[theorem]{Proposition}
\def\Rr{{\mathbb R}}
\def\Zn{{\mathbb Z}^n}
\def\Tn{{\mathbb T}^n}
\def\Rn{{\mathbb R}^n}
\def\R2n{{\mathbb R}^{2n}}
\def\Rr{{\mathbb R}}
\def\Rn{{\mathbb R}^n}
\def\R2{{\mathbb R}^2}
\def\R2n{{\mathbb R}^{2n}}
\def\N0{{\mathbb N}_{0}}
\def\l2h{{\ell^2(\hbar\mathbb Z^n)}}
\begin{document}

\title[Discrete time-dependent wave equations]
{Discrete time-dependent wave equations I. Semiclassical analysis}

\author[Aparajita Dasgupta]{Aparajita Dasgupta}
\address{
	Aparajita Dasgupta:
	\endgraf
	Department of Mathematics
	\endgraf
	Indian Institute of Technology, Delhi, Hauz Khas
	\endgraf
	New Delhi-110016 
	\endgraf
	India
	\endgraf
	{\it E-mail address} {\rm adasgupta@maths.iitd.ac.in}
}
\author[Michael Ruzhansky]{Michael Ruzhansky}
\address{
	Michael Ruzhansky:
	\endgraf
	Department of Mathematics: Analysis, Logic and Discrete Mathematics
	\endgraf
	Ghent University, Belgium
	\endgraf
	and
	\endgraf
	School of Mathematical Sciences
	\endgraf
	Queen Mary University of London
	\endgraf
	United Kingdom
	\endgraf
	{\it E-mail address} {\rm ruzhansky@gmail.com}
}
\author[Abhilash Tushir]{Abhilash Tushir}
\address{
	Abhilash Tushir:
	\endgraf
	Department of Mathematics
	\endgraf
	Indian Institute of Technology, Delhi, Hauz Khas
	\endgraf
	New Delhi-110016 
	\endgraf
	India
	\endgraf
	{\it E-mail address} {\rm abhilash2296@gmail.com}
}

\thanks{ The first author was supported by Core Research Grant, RP03890G,  Science and Engineering
	Research Board,  India. The second
 author was supported by the EPSRC Grant 
EP/R003025, by the FWO Odysseus 1 grant G.0H94.18N: Analysis and Partial Differential Equations and by the  Methusalem programme of the Ghent University Special Research Fund (BOF) (Grantnumber
01M01021).}
\date{\today}

\subjclass{Primary 46F05; Secondary 58J40, 22E30}
\keywords{wave equation; lattice; well-posedness.}

\begin{abstract}
In this paper we consider a semiclassical version of the wave equations with singular H\"older  time-dependent propagation speeds on the lattice $\hbar\mathbb{Z}^{n}$. We allow the propagation speed to vanish leading to the weakly hyperbolic nature of the equations. Curiously, very much contrary to the Euclidean case
considered by Colombini, de Giorgi and Spagnolo \cite{Colombini-deGiordi-Spagnolo-Pisa-1979} and by other authors, the Cauchy problem in this case is well-posed in $\ell^2(\hbar\mathbb{Z}^{n})$. However, we also recover the well-posedness results in the intersection of certain Gevrey and Sobolev spaces in the limit of the semiclassical parameter $\hbar\to 0$.
\end{abstract}
\maketitle

\tableofcontents

\section{Introduction}

In this paper we are interested in the wave equation on the lattice  
$$\hbar\mathbb{Z}^{n}=\{x\in \mathbb{R}^{n}: x=\hbar k, ~ k\in\mathbb{Z}^n\},$$ 
depending on a (small) parameter $\hbar>0$, and in the behaviour of its solutions as $\hbar\to 0$.
The Laplacian on $\hbar\mathbb{Z}^{n}$ is denoted by $\mathcal{L}_{\hbar}$ and is defined by
$$
\mathcal{L}_{\hbar}u(k)=\sum_{j=1}^{n}\left(u(k+\hbar v_j)+u(k-\hbar v_j)\right)-2nu(k),
$$ 
where $v_j$ is the $j^{th}$ basis vector in $\Zn$, having all zeros except for $1$ as the $j^{th}$ component.

The semiclassical analogue of the classical wave equation with time-dependent coefficients on the lattice $\hbar\Zn$ is given by the Cauchy problem
\begin{equation}\label{pde}
\left\{
                \begin{array}{ll}
                 \partial^{2}_{t}u(t,k)-\hbar ^{-2}a(t)\mathcal{L}_{\hbar}u(t,k)+b(t)u(t,k)=f(t, k),~~\textrm{ with }~~t\in(0,T],\\
                  u(0,k)=u_{0}(k),\quad k\in {\hbar\mathbb{Z}^{n}},\\
                 \partial_{t}u(0,k)=u_1(k), \quad k\in {\hbar\mathbb{Z}^{n}},
                \end{array}
              \right.
              \end{equation}
              for the time-dependent propagation speed $a=a(t)\geq 0$ and bounded  real-valued functions $b=b(t)\geq 0$, as well as $f\in L^{\infty}([0,T],\l2h)$.
             The equation \eqref{pde} is the semiclassical discretisation of the classical wave equation on $\mathbb{R}^n$ with the Cauchy problem in the form
             \begin{equation}\label{EQ:wern}
\left\{
                \begin{array}{ll}
                  \partial^{2}_{t}v(t,x)-a(t)\Delta v(t,x)+b(t)v(t,x)=f(t,x), \quad x\in\mathbb{R}^n,\\
                  v(0,x)=u_{0}(x),\\
                   \partial_{t}v(0,x)=u_1(x),
                \end{array}
              \right.
  \end{equation}
  where $\Delta$ is the usual Laplacian on $\Rn$.
Here we can think of the Cauchy data in \eqref{pde} as the evaluations of the Cauchy data from \eqref{EQ:wern} on the lattice $\hbar\Zn$. 

There is an extensive literature concerning \eqref{EQ:wern} going back to the seminal paper by Colombini, de Giorgi and Spagnolo \cite{Colombini-deGiordi-Spagnolo-Pisa-1979}. 
In particular, some very peculiar things can happen with the solvability of \eqref{EQ:wern}.
For example, it has been shown by Colombini and Spagnolo in \cite{Colombini-Spagnolo:Acta-ex-weakly-hyp}  and by Colombini, Jannelli and Spagnolo in \cite{Colombini-Jannelli-Spagnolo:Annals-low-reg} that even in one space dimension, the Cauchy problem \eqref{EQ:wern} does not have to be well-posed in $\mathcal C^\infty$ if $a\in \mathcal C^\infty$ is not strictly positive or if it is in the H\"older class $a\in \mathcal C^\alpha$ for $0<\alpha<1$.
Therefore, to ensure the well-posedness of \eqref{EQ:wern}, one is forced to work in Gevrey spaces $\mathcal \gamma^s$.

In the analysis of \eqref{EQ:wern}, it makes sense to distinguish between the following four cases depending on the properties of the propagation speed $a(t)$, with $\mathcal{C}^{\alpha}([0,T])$ denoting the H\"older class (see \eqref{EQ:Lipconst}):

\begin{itemize}
\item[] Case 1: $a\in {{\rm Lip}}([0,T])$, $\min_{[0,T]} a(t)>0,$
\item[] Case 2: $a\in \mathcal{C}^{\alpha}([0,T]),$ with $0<\alpha<1$, $\min_{[0,T]} a(t)>0,$
\item[] Case 3: $a\in\mathcal{C}^{l}([0,T]),$ with $l\geq 2,$ $a(t)\geq 0,$
\item[] Case 4: $a\in \mathcal{C}^{\alpha}([0,T])$, with $0<\alpha<2$, $a(t)\geq 0.$
\end{itemize} 
Indeed, the proofs of all these cases are different, also yielding results of different type in several cases. Here, Case 1 is the best `classical' situation, Case 2 is lower regularity strictly hyperbolic case, Case 3 is the regular weakly hyperbolic case, and Case 4 is the worst one when $a$ is of low regularity and may also vanish. 
Let us briefly summarise the known results for these cases, which we formulate, for simplicity, for the case $f\equiv 0$.

\begin{itemize}
\item[] Case 1: For any $s\in\Rr$, if the initial Cauchy data $(u_0,u_1)$ are in the Sobolev spaces $H^{s+1}(\Rn)\times H^s(\Rn)$, then there exists a unique solution of \eqref{EQ:wern} in the space $\mathcal C([0,T],H^{s+1}(\Rn))\cap\mathcal C^1([0,T],H^s(\Rn))$.

\item[] Case 2: If the initial Cauchy data $(u_0, u_1)$ are in $ \gamma^{s}\times \gamma^{s}$, then there exists a unique solution of \eqref{EQ:wern} in $\mathcal C^2([0,T],\gamma^{s})$, provided that
\begin{equation}\label{EQ:rn1}
1\leq s <1 +\frac{\alpha}{1-\alpha}.
\end{equation} 

\item[] Case 3: If the initial Cauchy data $(u_0, u_1)$ are in $\gamma^{s}\times \gamma^{s}$, then there exists a unique solution of \eqref{EQ:wern} in $\mathcal C^2([0,T],\gamma^{s})$, provided that
\begin{equation}\label{EQ:rn2}
1\leq s <1 +\frac{l}{2}.
\end{equation} 

\item[] Case 4: If the initial Cauchy data $(u_0, u_1)$ are in $\gamma^{s}\times \gamma^{s}$,  then there exists a unique solution of \eqref{EQ:wern} in $\mathcal C^2([0,T],\gamma^{s})$, provided that
\begin{equation}\label{EQ:rn3}
1\leq s <1 +\frac{\alpha}{2}. 
\end{equation} 
\end{itemize}
The situation in Case 1 is a classical result. Case 2 was shown in \cite{Colombini-deGiordi-Spagnolo-Pisa-1979} with further extensions in \cite{ColKi:02} and \cite{GR:11} for equations of higher order on $\Rr$ and $\Rn$, respectively.  Case 3 was shown, in particular, in \cite{KS}, and the higher dimensional case including low order terms was analysed in \cite{GR:12}. For the analysis in Case 4 on $\Rn$, \cite{GR:11} can be referred.

\medskip
{\em The situation with \eqref{pde} is in a striking difference with the above results for \eqref{EQ:wern}, in the sense that \eqref{pde} is always well-posed in $\l2h$ for all the Cases 1-4. }In some sense, it is natural since no regularity issues on the lattice are involved, so there is no noticeable loss of regularity. 
             
             \begin{theorem}\label{maintheo}
              Let $T>0.$ Assume $b\in L^{\infty}([0,T])$ satisfies $b(t)\geq 0$ for all $t\in[0,T]$. Then in all the cases, Case 1-Case 4, the Cauchy problem \eqref{pde}  is well-posed in $\l2h$. In particular, 
     if $u_0,u_1\in\l2h$ and $f\in L^2([0,T],\l2h)$, then for every $t\in [0,T]$, we have $u(t),\partial_t u(t)\in\l2h$. Moreover,  for each $\hbar>0$, there exists a constant $C_{\hbar,T}>0$ such that 
         \begin{multline}\label{EQ:est0}
\|u(t)\|^{2}_{\l2h}+\|\partial_{t}{u}(t)\|^{2}_{\l2h} 
\\
 \leq  C_{\hbar,T}\left(\|u_0\|^{2}_{\l2h}+\|{u}_1\|^{2}_{\l2h}
 +\|f\|^2_{L^2([0,T];\l2h)}\right),
\end{multline}      
for all $u_0, u_1\in \l2h$ and $f\in L^2([0,T];\l2h)$.
\end{theorem}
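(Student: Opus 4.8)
The plan is to pass to the Fourier side. The discrete Laplacian $\mathcal{L}_{\hbar}$ on $\hbar\Zn$ is diagonalized by the discrete (semiclassical) Fourier transform, which maps $\l2h$ isometrically onto $L^2(\Tn)$ (with the appropriate normalization). Under this transform, $-\hbar^{-2}\mathcal{L}_{\hbar}$ becomes multiplication by the symbol
\[
  \sigma_{\hbar}(\xi)=\frac{2}{\hbar^{2}}\sum_{j=1}^{n}\bigl(1-\cos(\hbar\xi_j)\bigr)=\frac{4}{\hbar^{2}}\sum_{j=1}^{n}\sin^{2}\!\Bigl(\frac{\hbar\xi_j}{2}\Bigr)\ge 0,
\]
which is a \emph{bounded} function of $\xi\in\Tn$ for each fixed $\hbar>0$, with $0\le\sigma_{\hbar}(\xi)\le 4n\hbar^{-2}$. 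This boundedness is precisely what distinguishes the lattice case from the Euclidean one, where the symbol $|\xi|^{2}$ is unbounded, and it is the reason no Gevrey restriction is needed.

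Writing $\widehat{u}(t,\xi)$ for the Fourier transform of $u(t,\cdot)$, the Cauchy problem \eqref{pde} becomes, for each $\xi\in\Tn$, the second-order ODE
\begin{equation}\label{EQ:ode-xi}
  \partial_t^{2}\widehat{u}(t,\xi)+\bigl(a(t)\sigma_{\hbar}(\xi)+b(t)\bigr)\widehat{u}(t,\xi)=\widehat{f}(t,\xi),
\end{equation}
with data $\widehat{u}(0,\xi)=\widehat{u_0}(\xi)$, $\partial_t\widehat{u}(0,\xi)=\widehat{u_1}(\xi)$. Since $a,b\in L^{\infty}([0,T])$ are nonnegative, the coefficient $q_{\hbar}(t,\xi):=a(t)\sigma_{\hbar}(\xi)+b(t)$ satisfies $0\le q_{\hbar}(t,\xi)\le (\|a\|_{L^\infty}+1)\,4n\hbar^{-2}+\|b\|_{L^\infty}=:M_{\hbar}$ uniformly in $t$ and $\xi$. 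I would define the natural energy
\[
  E(t,\xi):=|\partial_t\widehat{u}(t,\xi)|^{2}+\bigl(q_{\hbar}(t,\xi)+1\bigr)|\widehat{u}(t,\xi)|^{2},
\]
and differentiate: using \eqref{EQ:ode-xi} the cross terms involving $q_\hbar$ cancel, leaving $\partial_t E=2\,\mathrm{Re}\bigl(\overline{\partial_t\widehat u}\,\widehat f\bigr)+(\partial_t q_\hbar)|\widehat u|^{2}+|\widehat u|^{2}\cdot(\text{from the }+1)$-type terms; the only genuinely uncontrolled term would be $\partial_t q_\hbar=\dot a(t)\sigma_\hbar(\xi)+\dot b(t)$, which need not exist. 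This is the one place requiring care.

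To handle it, rather than assume differentiability I would use the integral (Duhamel/Grönwall) formulation directly: from \eqref{EQ:ode-xi}, $\partial_t\widehat u(t,\xi)=\widehat{u_1}(\xi)+\int_0^t\bigl(\widehat f(s,\xi)-q_\hbar(s,\xi)\widehat u(s,\xi)\bigr)\,ds$ and $\widehat u(t,\xi)=\widehat{u_0}(\xi)+\int_0^t\partial_s\widehat u(s,\xi)\,ds$; estimating $|\widehat u(t,\xi)|+|\partial_t\widehat u(t,\xi)|$ and using $0\le q_\hbar\le M_\hbar$ together with Cauchy--Schwarz in $s$, Grönwall's inequality yields
\[
  |\widehat u(t,\xi)|^{2}+|\partial_t\widehat u(t,\xi)|^{2}\le C_{\hbar,T}\Bigl(|\widehat{u_0}(\xi)|^{2}+|\widehat{u_1}(\xi)|^{2}+\int_0^T|\widehat f(s,\xi)|^{2}\,ds\Bigr),
\]
with $C_{\hbar,T}$ depending only on $M_\hbar$ and $T$ (hence on $\hbar$, $\|a\|_{L^\infty}$, $\|b\|_{L^\infty}$, $T$), but \emph{not} on $\xi$. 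Existence and uniqueness of $\widehat u(\cdot,\xi)$ for each $\xi$ follow from the standard theory of linear Volterra integral equations with $L^\infty$ kernel (Picard iteration). Finally I would integrate this pointwise-in-$\xi$ bound over $\Tn$, invoke Fubini (legitimate since everything is nonnegative) and the Plancherel identity for the discrete Fourier transform to convert $L^2(\Tn)$ norms back to $\l2h$ norms, obtaining \eqref{EQ:est0}; the continuity statement $u(t),\partial_t u(t)\in\l2h$ for every $t$ follows from dominated convergence using the uniform-in-$\xi$ bound. The main obstacle, as noted, is the low time-regularity of $a$ and $b$, which is circumvented by working with the integral formulation instead of differentiating the energy.
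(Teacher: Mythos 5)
Your argument is correct, and it reaches \eqref{EQ:est0} by a genuinely more elementary route than the paper. Both proofs start identically: take the Fourier transform on $\hbar\mathbb{Z}^n$, observe that $-\hbar^{-2}\sigma_{\mathcal{L}_\hbar}(\theta)$ is nonnegative and bounded by $4n\hbar^{-2}$, and reduce \eqref{pde} to a family of second-order ODEs in $t$ parametrised by the toral frequency, closing with Plancherel. The difference is in the key estimate for that ODE family. The paper feeds the reduced problem into Proposition \ref{odelem} and Corollary \ref{eqq11}, whose proofs are a case-by-case analysis (symmetriser and Gronwall in Case 1, regularised eigenvalues with $\epsilon=\langle\beta\rangle^{-2}$ in Cases 2 and 4, the quasi-symmetriser $Q^{(2)}_\epsilon$ in Case 3) producing constants with explicit growth in $\beta$, such as $e^{KT\langle\beta\rangle^{2}}$, $e^{KT\langle\beta\rangle^{1/s}}$, $e^{K\langle\beta\rangle^{6-4/\sigma}}$; the $\ell^2$ bound then follows because $\sigma_{\mathcal{L}_\hbar}$ is an order-zero symbol, so these exponential factors define bounded pseudo-difference operators for each fixed $\hbar$. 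You instead use only $0\le q_\hbar(t,\theta)\le M_\hbar$ together with the Volterra integral formulation and Gr\"onwall, which gives a single constant $C_{\hbar,T}\sim e^{C(1+M_\hbar)T}$ uniformly in $\theta$, with no case distinction and in fact no H\"older, Lipschitz or $\mathcal{C}^l$ regularity of $a$ at all (only $a,b\in L^\infty$); this is also the right way to dodge the non-differentiability of $a,b$, and your abandoned differentiated-energy digression is harmless. What your shortcut gives up is precisely the quantitative $\beta$-dependence: the crude constant blows up like $e^{C\hbar^{-2}T}$ and carries no information as $\hbar\to 0$, whereas the paper's Proposition \ref{odelem} and Corollary \ref{eqq11} are exactly what drive Theorem \ref{limthm} (via the inhomogeneous estimate with zero data) and the announced sequel with distributional coefficients. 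Two routine points you should still record: measurability in $\theta$ of the Picard solutions before integrating over $\mathbb{T}^n$, and the fact that the inverse Fourier transform of the ODE solutions solves the original lattice problem (which also yields uniqueness in $\l2h$).
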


At the same time, we can explain why there is such a difference in the well-posedness results between the settings of $\Rn$ and $\hbar\Zn$.
For this we need to specify the constants $C_{\hbar,T}$ appearing in  \eqref{EQ:est0}, especially their dependence on $\hbar$. Clearly, this constant may go to infinity as $\hbar\to 0$, especially in Cases 2-4. The following theorem shows that under the assumptions that the solutions in $\Rn$ exist, they can be recovered in the limit as $\hbar\to 0$. We require a little additional Sobolev regularity to ensure that the convergence results are global on the whole of $\Rn$.

\begin{theorem}\label{limthm}
Let $u$ and $v$ be the solutions of the Cauchy problems \eqref{pde} on $\hbar\Zn$ and \eqref{EQ:wern} on $\Rn$, respectively, with the same Cauchy data $u_0,u_1$. Now, 
\begin{itemize}
	\item[1)] for the Case 1, if 
	$(u_0,u_1)\in H^{s+1}(\Rn)\times H^s(\Rn)$ with $s\geq 5$ for $n\leq3$ and $s>3+\frac{n}{2}$ for $n\geq 4$, and
	\item[2)]  for the Case 2-Case 4, if $(u_0,u_1)\in\left(\gamma^{s}(\mathbb{R}^n)\cap H^{p+1}(\mathbb{R}^n)\right)\times \left(\gamma^{s}(\mathbb{R}^n)\cap H^{p}(\mathbb{R}^n)\right)$ satisfies the assumptions \eqref{EQ:rn1}-\eqref{EQ:rn3} and $p\geq 5$,
\end{itemize}
then for every $t\in [0,T]$, we have
\begin{equation}\label{EQ:conv}
	\|\partial_t u(t)-\partial_t v(t)\|_{\l2h}+\|u(t)-v(t)\|_{\l2h}\to 0\;\textrm{ as }\;\hbar\to 0.
\end{equation}	
\end{theorem}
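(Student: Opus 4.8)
The plan is to compare $u$ and $v$ by evaluating $v$ on the lattice and estimating the error caused by replacing $a(t)\Delta$ with $\hbar^{-2}a(t)\mathcal{L}_\hbar$. Set $w_\hbar(t,k):=u(t,k)-v(t,\hbar k)$ for $k\in\Zn$. Then $w_\hbar$ satisfies a Cauchy problem of the same type as \eqref{pde} with zero Cauchy data and a source term
\[
g_\hbar(t,k)=a(t)\left(\hbar^{-2}\mathcal{L}_\hbar v(t,\cdot)-\Delta v(t,\cdot)\right)(\hbar k),
\]
which is precisely the consistency error of the discrete Laplacian. By Theorem \ref{maintheo} applied to $w_\hbar$ (with the same $b$, which is still $\geq 0$), we get
\[
\|w_\hbar(t)\|_{\l2h}^2+\|\partial_t w_\hbar(t)\|_{\l2h}^2\leq C_{\hbar,T}\,\|g_\hbar\|_{L^2([0,T];\l2h)}^2 .
\]
So everything reduces to two quantitative points: (i) controlling $\|g_\hbar\|_{L^2([0,T];\l2h)}$ and showing it tends to $0$ fast enough as $\hbar\to0$, and (ii) tracking the blow-up rate of $C_{\hbar,T}$ in $\hbar$ so that the product still goes to zero. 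This is exactly why the extra Sobolev regularity ($s\geq 5$, resp. $p\geq 5$) is imposed: we need enough decay in $g_\hbar$ to beat the (possibly large) constant $C_{\hbar,T}$.

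For step (i), the key is a Taylor expansion: for a smooth function $V$ on $\Rn$, the standard centred second-difference gives
\[
\hbar^{-2}\mathcal{L}_\hbar V(x)-\Delta V(x)=\frac{\hbar^{2}}{12}\sum_{j=1}^{n}\partial_{x_j}^{4}V(x)+O(\hbar^{4}\|V\|_{C^6}),
\]
pointwise, so $\big|\big(\hbar^{-2}\mathcal{L}_\hbar v(t,\cdot)-\Delta v(t,\cdot)\big)(x)\big|\lesssim \hbar^{2}\sup_{|\beta|\leq 4}|\paal v(t,x)|$, uniformly once we know $v(t,\cdot)\in H^{p+1}$ with $p\geq 5$ (using Sobolev embedding to bound the fourth derivatives). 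To pass from a pointwise bound to the $\l2h=\ell^2(\hbar\Zn)$ norm one uses that $\|V\big|_{\hbar\Zn}\|_{\ell^2(\hbar\Zn)}^2=\hbar^{n}\sum_{k}|V(\hbar k)|^2$ is a Riemann sum for $\|V\|_{L^2(\Rn)}^2$, which is controlled by $\|V\|_{H^{n/2+\varepsilon}}^2$ uniformly in $\hbar$ — here one needs $V$ to lie in a Sobolev space above the critical index so that the sampling is stable in $\hbar$ (this is where $s>3+\frac n2$, resp. $p\geq5$, enters). Thus $\|g_\hbar(t)\|_{\l2h}\lesssim \hbar^{2}\sup_{|\beta|\leq4}\|\paal v(t,\cdot)\|_{H^{n/2+\varepsilon}(\Rn)}\lesssim \hbar^{2}\|v(t,\cdot)\|_{H^{s'}}$ for a suitable $s'$, and integrating in $t$ over $[0,T]$ (using that the solution map of \eqref{EQ:wern} keeps $v(t)$ in the relevant Sobolev space, by the energy estimates recalled for Cases 1–4) gives $\|g_\hbar\|_{L^2([0,T];\l2h)}\lesssim \hbar^{2}$.

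For step (ii), one must revisit the proof of Theorem \ref{maintheo} and read off the $\hbar$-dependence of $C_{\hbar,T}$. The factor $\hbar^{-2}a(t)$ multiplying $\mathcal{L}_\hbar$, together with the bound $\|\mathcal{L}_\hbar\|_{\l2h\to\l2h}\leq 4n$, means the relevant energy functional involves a frequency cut-off at scale $\hbar^{-2}$, so in the worst cases (Cases 2–4) one expects $C_{\hbar,T}$ to grow at most polynomially in $\hbar^{-1}$, say like $\hbar^{-N}$ for some fixed $N=N(\alpha,l,n)$; in Case 1 it is bounded. Then $\|w_\hbar(t)\|_{\l2h}^2+\|\partial_tw_\hbar(t)\|_{\l2h}^2\lesssim \hbar^{-N}\hbar^{4}=\hbar^{4-N}$, and the required extra regularity thresholds ($s\geq5$, $p\geq5$) are chosen precisely to push the consistency order high enough — by using higher-order Taylor expansions, i.e. more terms, $g_\hbar=O(\hbar^{2m})$ with $m$ large — that $4-N$, or more precisely $2m-N$, is positive; the stated thresholds are the ones that make this work for all $n$ and all of Cases 1–4. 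Finally, one adds and subtracts: $\|u(t)-v(t)\|_{\l2h}\leq \|w_\hbar(t)\|_{\l2h}$ directly (since $w_\hbar(t,k)=u(t,k)-v(t,\hbar k)$ and $v(t)\big|_{\hbar\Zn}$ is what the $\l2h$ norm sees), and similarly for the time derivative, giving \eqref{EQ:conv}.

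The main obstacle I expect is step (ii): honestly extracting the precise power of $\hbar$ in $C_{\hbar,T}$ from the (necessarily case-dependent) energy arguments behind Theorem \ref{maintheo}, and matching it against the consistency order. In Cases 2–4 the energy estimate must be done with $\hbar$-dependent symbol classes or microlocal weights that degenerate as $a(t)\to0$, and the interplay between the weak-hyperbolicity loss (the Gevrey index constraints \eqref{EQ:rn1}–\eqref{EQ:rn3}) and the $\hbar^{-2}$ scaling is delicate; getting a clean polynomial bound $C_{\hbar,T}\lesssim \hbar^{-N}$ with an explicit $N$, rather than something worse, is the crux. Everything else — the Taylor expansion, the Riemann-sum/sampling estimate, and the reduction via Theorem \ref{maintheo} — is routine once the regularity bookkeeping is set up.
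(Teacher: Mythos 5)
Your overall skeleton (set $w=u-v|_{\hbar\Zn}$, observe that $w$ solves the lattice problem with zero Cauchy data and source equal to the consistency error $a(t)(\hbar^{-2}\mathcal{L}_\hbar-\mathcal{L})v$, estimate that error by a fourth-order Taylor expansion) coincides with the paper's. But the step you yourself flag as the crux is where the argument breaks: you invoke Theorem \ref{maintheo} as a black box and hope that $C_{\hbar,T}$ is bounded in Case 1 and at most polynomial, $C_{\hbar,T}\lesssim\hbar^{-N}$, in Cases 2--4. That is not what Theorem \ref{maintheo} gives, and it cannot be repaired: the constant produced there is $\max\{C\|e^{BT(I-\hbar^{-2}\mathcal{L}_\hbar)^{\gamma}}\|^2_{\mathcal{L}(\l2h)},\dots\}$, and since the symbol $1-\hbar^{-2}\sigma_{\mathcal{L}_\hbar}$ reaches size $\sim\hbar^{-2}$, this is exponentially large in $\hbar^{-1}$ in \emph{all} cases (even Case 1, with the paper's estimate \eqref{EQ:ODE1}, gives $e^{cT\hbar^{-2}}$). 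In Cases 2--4 no polynomial-in-$\hbar^{-1}$, frequency-uniform energy bound can exist, since the corresponding continuous problems are not $L^2$-well-posed; the frequency-wise growth $e^{cT\langle\beta\rangle^{1/s}}$ with $\beta\sim\hbar^{-1}$ is genuinely exponential. A fixed consistency order $\hbar^{2}$ (or $\hbar^{2m}$ — note the hypotheses only give $v(t,\cdot)\in C^4$ with fourth derivatives in $H^2$, so you cannot push $m$ up anyway) can never beat such a constant, so the product $C_{\hbar,T}\|g_\hbar\|^2$ does not tend to zero along your route.

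The paper's proof avoids this entirely, and this is the missing idea: because $w$ has \emph{zero} Cauchy data, one applies Corollary \ref{eqq11} rather than Theorem \ref{maintheo}; the homogeneous contribution (which is what carries the exponential factor $e^{KT\langle\beta\rangle^{1/s}}$) vanishes identically, and the Duhamel part comes with a constant independent of $\beta$ apart from the explicit polynomial factor $(1+\beta^2)^2$. That factor is then not estimated crudely by $\hbar^{-4}$ but kept as the multiplier $1-\hbar^{-2}\sigma_{\mathcal{L}_\hbar}(\theta)$, i.e.\ via Plancherel as the operator $I-\hbar^{-2}\mathcal{L}_\hbar$ applied to the consistency error; by the Lagrange-remainder Taylor expansion this equals $\hbar^{2}$ times $(I-\hbar^{-2}\mathcal{L}_\hbar)$ acting on shifted samples of fourth derivatives of $v$, whose $\l2h$ norms converge (this is exactly where $s\geq 5$, resp.\ $p\geq 5$, i.e.\ $\partial^{4}v(t,\cdot)\in H^{2}(\Rn)$, is used — not to tune a consistency order against a blow-up rate) to $\|(I-\mathcal{L})\partial^{4}v(t,\cdot)\|_{L^2(\Rn)}<\infty$. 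Hence the whole right-hand side is $O(\hbar^{4})$ times a quantity bounded as $\hbar\to0$, giving \eqref{EQ:conv}. In short: your reduction and Taylor expansion are fine, but you must exploit the zero initial data through the $\beta$-uniform inhomogeneous estimate of Corollary \ref{eqq11}, instead of the $\hbar$-dependent well-posedness constant of Theorem \ref{maintheo}, which is exponentially large and defeats the argument.
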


Thus, when $\hbar\to 0$, for the above mentioned space, we actually recover the well-posedness results on $\Rn$,  where the solution $u$ in the above statement becomes restricted to the lattice $\hbar\Zn$.

We note that the symbolic calculus of pseudo-differential operators on lattices (or pseudo-difference operators) has been developed in \cite{MR4075583}. In particular, the symbol of $\mathcal{L}_{\hbar}$ defined by 
$\sigma_{\mathcal{L}_{\hbar}}(k,\theta)=e^{-2\pi\frac{ i}{\hbar}k\cdot \theta}
\mathcal{L}_{\hbar}(e^{2\pi\frac{ i}{\hbar}k\cdot \theta})$
is given by
\begin{eqnarray}\label{EQ:Lhsymb-i}
\sigma_{\mathcal{L}_{\hbar}}(k,\theta)&=&e^{-2\pi\frac{ i}{\hbar}k\cdot \theta}
\left(\sum\limits_{j=1}^{n}\left(e^{2\pi \frac{i}{\hbar}(k+hv_{j})\cdot\theta}+e^{2\pi \frac{i}{\hbar}(k-hv_{j})\cdot\theta}\right)-2ne^{2\pi\frac{ i}{\hbar}k\cdot \theta}\right)\nonumber\\
&=&\sum_{j=1}^{n}\left(e^{2\pi i \theta_j}+e^{-2\pi i \theta_j}\right)-2n\nonumber\\
&=&2\sum_{j=1}^{n}\cos(2\pi\theta_j)-2n,
\end{eqnarray} 
with $(k,\theta)\in\hbar\mathbb Z^n\times\Tn$, and it is independent of $k$ (and of $\hbar$).
\begin{defi}{(Symbol class $\left.S^{m}\left(\hbar\mathbb{Z}^{n} \times \mathbb{T}^{n}\right)\right)$}. Let $m\in(-\infty,\infty)$.
	We say that a function $\sigma: \hbar\mathbb{Z}^{n} \times \mathbb{T}^{n} \rightarrow \mathbb{C}$   belongs to  $S^{m}\left(\hbar\mathbb{Z}^{n} \times \mathbb{T}^{n}\right)$    if $\sigma(k, \cdot) \in C^{\infty}\left(\mathbb{T}^{n}\right)$ for all $k \in \hbar\mathbb{Z}^{n}$, and for
	all multi-indices $\alpha, \beta$ there exists a positive constant $C_{\alpha, \beta,\hbar}$ such that 
	\begin{equation}
			\left|D_{\theta}^{(\beta)} \Delta_{k}^{\alpha} \sigma(k, \theta)\right| \leq C_{\alpha, \beta,\hbar}(1+|k|)^{m-|\alpha|},
	\end{equation}
	for all $k \in \hbar\mathbb{Z}^{n}$ and $\theta \in \mathbb{T}^{n}$. We denote by  $\mathrm{Op}(\sigma)$ the pseudo-difference operator with symbol $\sigma$ given by
\begin{equation}\label{EQ:Fsymb}
	\mathrm{Op}(\sigma) f(k):=\int_{\mathbb{T}^{n}} e^{2 \pi \frac{i}{\hbar} k \cdot \theta} \sigma(k, \theta) \widehat{f}(\theta) \mathrm{d} \theta, \quad k\in\hbar\Zn,
\end{equation} 
where 
\begin{equation}\label{EQ:ftlat-i}
\widehat{f}(\theta)=\sum_{k\in \hbar\mathbb{Z}^{n}}e^{-2\pi\frac{ i}{\hbar}k\cdot \theta}f(k),~~~\theta\in\mathbb{T}^{n},~k\in \hbar\mathbb{Z}^{n}.
\end{equation} 
\end{defi}
We refer to Section \ref{SEC:proof} for further explanations in this direction, and to \cite{MR4075583} for thorough details. 
The proof of Theorem \ref{maintheo},   given in Section \ref{SEC:proof}, will rely on some elements of the Fourier analysis on the lattice $\Zn$. 
Difference equations on lattices, including Schr\"odinger equations have been studied (e.g. in \cite{RR-AAM,RR-2006,Rab09,Rab2010,Rab13}) by developing the analysis in terms of kernels. The general symbolic calculus of pseudo-differential operators on $\Zn$ has been recently established in \cite{MR4075583}. It allows one to refine other previously known results, e.g. conditions for the boundedness of pseudo-differential operators in the lattice as e.g. in \cite{CR2011}. The symbolic calculus on $\Zn$ can be thought of as a dual one to the calculus developed on the torus in \cite{RT-JFAA,RT-NFAO,RT-Birk} in the framework of more general research on pseudo-differential operators on compact Lie groups \cite{RT-IMRN,Ruzhansky-Turunen:BOOK}.

In Section \ref{SEC:prep},  we establish some results concerning ordinary differential equations that will be instrumental in the proof of Theorem \ref{maintheo}. Moreover, we establish them with explicit expressions on the appearing constants. While this is not needed for the proof of Theorem \ref{maintheo}, this will be useful in the subsequent paper where we will treat the case of distributional $a,b$ and $f$. In Section \ref{SEC:proof}, we prove Theorem \ref{maintheo}. Finally, in Section \ref{SEC:limit}, we discuss the limiting behaviour of solutions to ({\ref{pde}}) in the limit of the semiclassical parameter $\hbar\to 0$.

To simplify the notation, throughout the paper we will be writing $A \lesssim B$ if there exists a constant $C$ independent of the appearing parameters such that $A\leq CB.$

\section{Preparatory estimates}
            \label{SEC:prep}  
              
In the next proposition, we prove certain energy estimates for second order ordinary differential
equations with explicit dependence on parameters. This will be crucial in the
second part of this paper when we will be considering distributional coefficients.
It extends the result obtained in \cite{Taranto-Ruzhansky-WE-graded} from the point of view of inclusion of the lower order terms and the explicit control on the constants.

Notation-wise, we write that $a\in {\mathcal C}^{\alpha}([0,T])$ if for some constant $L_a$, we have
\begin{equation}\label{EQ:Lipconst}
|a(t)-a(s)|\leq L_a |t-s|^\alpha,
\end{equation}       
for all $t,s\in [0,T].$ The smallest $L_a$ in the above inequality will be denoted by 
$\|a\|_{ {\mathcal C}^{\alpha}([0,T])}.$

\begin{prop}\label{odelem}
Let $T>0$ and $\beta>0$ be positive constants, let $b(t)$ be a bounded real-valued function and let $a(t)$ be a function satisfying one of the following conditions: 
\begin{itemize}
\item[] Case 1: $a\in {{\rm Lip}}([0,T])$, $a_0:=\min_{[0,T]} a(t)>0,$
\item[] Case 2: $a\in \mathcal{C}^{\alpha}([0,T]),$ with $0<\alpha<1$, $a_0:=\min_{[0,T]} a(t)>0,$
\item[] Case 3: $a\in\mathcal{C}^{l}([0,T]),$ with $l\geq 2,$ $a(t)\geq 0,$
\item[] Case 4: $a\in \mathcal{C}^{\alpha}([0,T])$, with $0<\alpha<2$, $a(t)\geq 0.$
\end{itemize} 
Consider the following Cauchy problem:
\begin{equation}\label{ode}
\left\{
                \begin{array}{ll}
                  v^{\prime\prime}(t)+\beta^{2}a(t)v(t)+b(t)v(t)=0, \quad t\in(0,T],\\
                  v(0)=v_{0}\in\mathbb{C},\\
                 v^{\prime}(0)=v_1\in\mathbb{C}.
                \end{array}
              \right.
  \end{equation}
  
  Then the following holds:
  \begin{itemize}
  	\item[Case 1 :] There exist two positive constants $C_{1},K_{1}>0$  such that  for all $t\in [0,T]$, we have 
  	\begin{equation}\label{EQ:ODE1}
  	|v(t)|^{2}+|v^{\prime}(t)|^{2}\leq 
  	C_{1}e^{K_{1}T\langle\beta\rangle^{2}}(|v_0|^2+|v_1|^2),
  	  	\end{equation} 
    	holds for all $\beta>0$.
  	\item[Case 2 :] There exist two positive constants $C_{2},K_{2}>0$  such that for all $t\in [0,T]$, we have 
  	\begin{equation}\label{odecase2}
  	|v(t)|^2+|v^{\prime}(t)|^2\leq  C_{2}e^{K_{2}T\langle\beta\rangle^{\frac{1}{s}}}( |v_0|^2+|v_1|^2),
  	\end{equation}
for any $0<s\leq\frac{1}{2}$ and for all $\beta>0$.
  	\item[Case 3 :] There exist two positive constants $C_{3},K_{3}>0$  such that for all $t\in [0,T]$, we have
  	\begin{equation}\label{odecase3}  
  	|v(t)|^{2}+|v'(t)|^{2}\leq C_{3}e^{K_{3}\langle\beta\rangle^{6-\frac{4}{\sigma}}}(|v_{0}|^{2}+|v_{1}|^{2}),
  	\end{equation} 
with $\sigma=1+\frac{l}{2}$ and  for all $\beta>0$.
  	\item[Case 4 :] There exist two positive constants $C_{4},K_{4}>0$ such that for all $t\in [0,T]$, we have
  	\begin{equation}\label{odecase4}  
|v(t)|^2+|v^{\prime}(t)|^2\leq  C_{4}e^{K_{4}T\langle\beta\rangle^{\frac{1}{s}}}( |v_0|^2+|v_1|^2),
  	\end{equation} 
for any $0<s\leq \frac{\alpha+2}{4}$ and for all $\beta>0$.
  \end{itemize}
The constants $C_{j}$'s and $K_{j}$'s for $j=1,2,3~~ \text{and}~~4$ may depend on $T$ but not on $\beta$.
  \end{prop}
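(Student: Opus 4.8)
The plan is to prove all four cases by the classical energy method of Colombini--de Giorgi--Spagnolo, working on the Fourier side is unnecessary here since \eqref{ode} is already an ODE in $t$ with spectral parameter $\beta$. In every case the strategy is the same: introduce a suitable energy functional $E(t)$, built from $|v'(t)|^2$ and a term $\approx (\beta^2 a(t)+\text{correction})|v(t)|^2$, differentiate it, and estimate $E'(t)\le \psi(t)E(t)$ for an integrable function $\psi$ whose integral over $[0,T]$ produces the stated exponent in $\langle\beta\rangle$; Gr\"onwall's lemma then closes the argument. The reason the exponent differs from case to case is precisely the size of the extra correction one must add to $a(t)$ to make it bounded below and, in the low-regularity cases, the regularization parameter one must choose.

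For \textbf{Case 1} ($a$ Lipschitz, $a\ge a_0>0$) I would take $E(t)=|v'(t)|^2+(\beta^2 a(t)+b(t)+c)|v(t)|^2$ for a constant $c$ chosen so that $\beta^2 a(t)+b(t)+c\ge 1$ for all $t$ (using $a\ge a_0$ and $b$ bounded). Then $E'(t)$ contains the term $(\beta^2 a'(t)+b'(t))|v|^2$ — but $b$ need only be bounded, not differentiable, so instead I absorb $b(t)|v(t)|^2$ into the $|v'|^2$-type estimate via Cauchy--Schwarz rather than putting it inside $E$; concretely set $E(t)=|v'(t)|^2+(\beta^2 a(t)+c)|v(t)|^2$ with $c$ large enough that $\beta^2 a(t)+c\gtrsim \langle\beta\rangle^2$, compute $E'(t)=2\mathrm{Re}(v'\overline{v''})+\beta^2 a'(t)|v|^2+2(\beta^2 a(t)+c)\mathrm{Re}(v'\overline v)$, substitute $v''=-\beta^2 a v-bv$, and bound $|E'(t)|\le \big(L_a + C_b/\langle\beta\rangle^2 + \dots\big)\langle\beta\rangle^2\, E(t)\lesssim \langle\beta\rangle^2 E(t)$. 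Integrating gives $E(t)\le e^{CT\langle\beta\rangle^2}E(0)$ and since $E(0)\asymp \langle\beta\rangle^2(|v_0|^2+|v_1|^2)$ while $|v(t)|^2+|v'(t)|^2\le E(t)$, we obtain \eqref{EQ:ODE1} (the polynomial factor $\langle\beta\rangle^2$ is absorbed into the exponential at the cost of enlarging $K_1$).

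For \textbf{Cases 2 and 4} (H\"older $a$, possibly vanishing) I would use the Colombini--de Giorgi--Spagnolo approximation trick: replace $a(t)$ by $a_\varepsilon(t)=(a*\rho_\varepsilon)(t)+\varepsilon^\alpha$ (a mollification plus a small constant), which is smooth, satisfies $|a(t)-a_\varepsilon(t)|\lesssim \varepsilon^\alpha$, $a_\varepsilon\ge \varepsilon^\alpha$, and $|a_\varepsilon'(t)|\lesssim \varepsilon^{\alpha-1}$ in Case 2 (and $|a_\varepsilon''|\lesssim\varepsilon^{\alpha-2}$ in Case 4). Define the energy with $a_\varepsilon$ in place of $a$ plus a lower-order constant, namely $E_\varepsilon(t)=|v'(t)|^2+(\beta^2 a_\varepsilon(t)+c_\varepsilon)|v(t)|^2$; then $E_\varepsilon'(t)$ picks up $\beta^2(a_\varepsilon-a)|v|^2$ from the substitution of $v''$ and $\beta^2 a_\varepsilon'|v|^2$ from differentiating the coefficient, yielding $|E_\varepsilon'(t)|\lesssim \big(\beta^2\varepsilon^\alpha/\sqrt{\cdot} + \varepsilon^{\alpha-1} + \dots\big)E_\varepsilon(t)$, roughly $\lesssim(\beta\varepsilon^{\alpha/2}+\varepsilon^{\alpha-1})E_\varepsilon(t)$ after balancing $c_\varepsilon\asymp \beta^2\varepsilon^\alpha$. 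Optimizing $\varepsilon$ — choosing $\varepsilon=\beta^{-2/(2-\alpha)}$ in Case 2 so that both exponents become $\asymp \beta^{2/(2-\alpha)-\dots}$, and noting $\tfrac{1}{s}\ge \tfrac{1}{1/2}=2\ge$ the relevant exponent for $s\le 1/2$ — gives \eqref{odecase2}; the analogous computation with the second-derivative bound in Case 4 gives the exponent $\tfrac{1}{s}$ with $s\le\tfrac{\alpha+2}{4}$ as in \eqref{odecase4}. \textbf{Case 3} ($a\in\mathcal C^l$, $a\ge 0$) is the regular weakly hyperbolic case, handled similarly but using the quasi-symmetrizer / approximation at level $\varepsilon$ with $|a_\varepsilon|\gtrsim\varepsilon$, $|a_\varepsilon^{(l)}|\lesssim 1$ and a Taylor-remainder bound $|a-a_\varepsilon^{\text{Taylor}}|\lesssim\varepsilon^l$, which after optimization in $\varepsilon$ produces the exponent $6-\tfrac{4}{\sigma}$ with $\sigma=1+l/2$; I expect here one must be careful with the precise power, possibly following the argument in \cite{KS} or \cite{GR:12}.

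The main obstacle will be bookkeeping the \emph{explicit} dependence of all constants on $\beta$ (and keeping $T$-dependence separate), since the point of the proposition — as the authors stress — is that these constants must be tracked for the sequel with distributional coefficients. In particular, in Cases 2--4 one must check carefully that after the $\varepsilon$-optimization every stray polynomial factor in $\beta$ can indeed be absorbed into the exponential $e^{K T\langle\beta\rangle^{1/s}}$ (or $e^{K\langle\beta\rangle^{6-4/\sigma}}$) uniformly, and that the lower bound $E(0)\gtrsim \langle\beta\rangle^{2}(|v_0|^2+|v_1|^2)$ used to pass from the energy estimate back to $|v|^2+|v'|^2$ does not cost more than a harmless polynomial factor. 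The handling of the merely bounded (non-differentiable) coefficient $b(t)$ throughout — ensuring it is never differentiated and is always controlled by Cauchy--Schwarz inside the Gr\"onwall step — is the secondary technical point that distinguishes this from \cite{Taranto-Ruzhansky-WE-graded}.
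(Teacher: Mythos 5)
Your route is genuinely different from the paper's: you work with scalar Colombini--de Giorgi--Spagnolo energies $|v'|^2+(\beta^2a_\varepsilon+c)|v|^2$ built from a mollified (and shifted) coefficient, whereas the paper rewrites \eqref{ode} as a $2\times2$ first-order system and uses a symmetriser in Case 1, the Colombini--Kinoshita change of variables with the regularised eigenvalues $\pm(\sqrt{a}\ast\phi_\epsilon)$ (shifted by powers of $\epsilon$ in Case 4) in Cases 2 and 4, and the quasi-symmetriser $Q^{(2)}_\epsilon=S+\epsilon^2\,\mathrm{diag}(1,0)$ together with the D'Ancona--Spagnolo oscillation estimate \eqref{refest}--\eqref{ChRuest} in Case 3. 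For Case 1 your computation is correct (it even gives a bound uniform in $\beta$, stronger than \eqref{EQ:ODE1}), and for Case 2 the scheme clearly lands under the stated exponent: since $1/s\ge 2$, any growth $\langle\beta\rangle^{2(1-\alpha)}$ or $\langle\beta\rangle^{1-\alpha}$ suffices, even though your announced balancing $\varepsilon=\beta^{-2/(2-\alpha)}$ does not actually equalise $\varepsilon^{\alpha-1}$ against $\beta^2\varepsilon^{\alpha}$ (the natural choices are $\varepsilon=\beta^{-2}$, as in the paper, or $\varepsilon=\beta^{-1}$ with the refined cross-term bound). Your handling of $b$ (never differentiated, absorbed by Cauchy--Schwarz) matches the paper's treatment.

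There are, however, two concrete gaps. First, Case 3 is not argued: you defer to \cite{KS}, \cite{GR:12} and an unspecified Taylor-remainder bound, and the claim that your optimisation "produces the exponent $6-\frac{4}{\sigma}$" cannot be substantiated -- that exponent is an artifact of the paper's crude treatment of the lower-order block $B$, not an output of your scheme; what is missing is either an oscillation estimate of the type \eqref{refest} applied to $a+\epsilon^2$, or the (easy but unstated) observation that the scalar energy with $a_\varepsilon=a+\varepsilon^2$ and the crude bounds $|a'|\lesssim 1$, $a+\varepsilon^2\ge\varepsilon^2$ already yields growth of order $e^{C\beta^{2/3}}$, which implies \eqref{odecase3}. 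Second, in Case 4 the regularisation bounds you state fail for $1<\alpha<2$: mollifying $a$ itself does not give $|a_\varepsilon'|\lesssim\varepsilon^{\alpha-1}$ once $\alpha>1$ (take $a(t)=t^2$), and the second-derivative bound $|a_\varepsilon''|\lesssim\varepsilon^{\alpha-2}$ you mention is never wired into the energy argument; the paper avoids this precisely by regularising the eigenvalues $\pm\sqrt{a}$, using that $\sqrt{a}\in\mathcal C^{\alpha/2}$ with $\alpha/2<1$, and then replacing $\alpha$ by $\alpha/2$ in the final exponent, which is where the threshold $s\le\frac{\alpha+2}{4}$ in \eqref{odecase4} comes from. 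Your sketch needs either that device, or an explicit verification that the weaker bounds $|a_\varepsilon'|\lesssim 1$, $a_\varepsilon\ge\varepsilon^{\alpha}$ still produce a growth exponent below $\frac{4}{\alpha+2}$ (they do, but this is exactly the bookkeeping you flag as the main obstacle and do not carry out); also note that for $\alpha>1$ the error bound $|a\ast\rho_\varepsilon-a|\lesssim\varepsilon^{\alpha}$ requires an even mollifier.
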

  
  \begin{proof}
  First, we reduce the problem \eqref{ode} to a first order equation. We define
  \begin{equation}
V(t) := \left(
    \begin{matrix}
        iv(t) \\
        \partial_{t}v(t)
    \end{matrix}\right),
  ~~~~~   V_{0} := \left(
   \begin{matrix}
   i {v_0}\\
  { v_1}\end{matrix}\right),
    \end{equation}
and the matrices 
\begin{equation}
A(t):=\left(\begin{matrix}
0&1\\
a(t)&0
\end{matrix}\right),~~~
B(t):= \left(\begin{matrix}
0&1-\beta^2\\
b(t)&0
\end{matrix}\right).
\end{equation}
This allows us to reformulate the given second order system \eqref{ode} as the first order system
\begin{equation}\label{Estode}
\left\{
\begin{array}{ll}
V^{\prime}=i\beta^{2} A(t)V(t)+iB(t)V(t),\\
  V(0)=V_{0}.
\end{array}\right.
\end{equation}
Now we will treat each case separately.

\medskip 
\textbf{Case 1: $a\in \text{Lip}([0,T]), a_0:=\min_{[0,T]} a(t)>0$}. 

\smallskip
We observe that the eigenvalues of the matrix $A(t)$ are given by $\pm\sqrt{a(t)}.$ The symmetriser $S$ of $A,$ i.e., the matrix such that 
$$SA-A^{\ast}S=0,
$$ is given by
$$S(t)=\left(\begin{matrix}
a(t)&0\\
0&1
\end{matrix}\right).$$
Consider
\begin{eqnarray}\label{eqq1}
\left(S(t)V(t),V(t)\right)&=&a(t)|v(t)|^{2}+|v^{\prime}(t)|^{2}\nonumber\\
&\leq& \max_{t\in[0,T]}\{a(t),1\}|v(t)|^{2}+\max_{t\in[0,T]}\{a(t),1\}|v^{\prime}(t)|^{2}\nonumber\\
&=&\max_{t\in[0,T]}\{a(t),1\}|V(t)|^2.
\end{eqnarray}
Similarily
\begin{eqnarray}\label{eqq2}
\left(S(t)V(t),V(t)\right)&\geq&\min_{t\in[0,T]}\{a(t),1\}|V(t)|^2.
\end{eqnarray}
If we now define the energy
$$E(t):=\left(S(t)V(t),V(t)\right),$$ 
then from \eqref{eqq1}, and \eqref{eqq2}, it follows that
\begin{equation}\label{eqq3}
\min_{t\in[0,T]}\{a(t),1\}|V(t)|^{2}\leq E(t)\leq \max_{t\in[0,T]}\{a(t),1\}|V(t)|^{2}.	
\end{equation}
Since $a\in {\textbf{Lip}}([0,T])$, there exist two constants $a_0$ and $a_1$ such that
$$a_0=\min\limits_{t\in[0,T]}a(t) ~~~\textrm{ and }~~~a_1=\max\limits_{t\in[0,T]}a(t).$$
Thus, if we set
$ c_{0,a}= \min\{a_0,1\}$ and $c_{1,a}=\max\{a_1,1\}$, then
the inequality \eqref{eqq3} becomes
\begin{equation}\label{eqq4}
c_{0,a}|V(t)|^{2}\leq E(t)\leq c_{1,a}|V(t)|^{2}.	
\end{equation}
Then we can calculate
\begin{eqnarray}\label{eninq3}
E_{t}(t)&=&\left(S_{t}(t)V(t), V(t)\right)+\left(S(t)V_{t}(t), V(t)\right)+\left(S(t)V(t), V_t(t)\right)\nonumber\\
&=& \left(S_{t}(t)V(t), V(t)\right)+i\beta^{2}\left(S(t)A(t)V(t), V(t)\right)+i\left(S(t)B(t)V(t), V(t)\right)-\nonumber\\
&&i\beta^{2}\left(S(t)V(t), A(t)V(t)\right)-i\left(S(t)V(t), B(t)V(t)\right)\nonumber\\
&=& \left(S_{t}(t)V(t), V(t)\right)+i\beta^{2}\left(\left(SA-A^{\ast}S\right)V(t), V(t)\right)+\nonumber\\
&&i\left(\left(SB-B^{\ast}S\right)V(t), V(t)\right)\nonumber\\
&=&\left(S_{t}(t)V(t), V(t)\right)+i\left(\left(SB-B^{\ast}S\right)V(t), V(t)\right). 
\end{eqnarray}
From the definition of $S$ and $B$, we get
\begin{eqnarray*}\label{bdd}
SB-B^{\ast}S 
&=&\left(\begin{matrix}
0&a(1-\beta^2)-b\\
b-a(1-\beta^2)&0
\end{matrix}\right).\nonumber
\end{eqnarray*}
Then
\begin{eqnarray}\label{eqq5}
	\|SB-B^{\ast}S\|_{L^{\infty}([0,T])}&\leq&\langle\beta\rangle^{2}\|a\|_{L^{\infty}([0,T])}+\|b\|_{L^{\infty}([0,T])}.
\end{eqnarray}
From using \eqref{eqq5} in \eqref{eninq3}, we get
\begin{eqnarray}\label{Ineqen3}
E_{t}(t)&\leq& \|S_t(t)\|_{L^{\infty}([0,T])} |V(t)|^2+(\langle\beta\rangle^{2}\|a\|_{L^{\infty}([0,T])}+\|b\|_{L^{\infty}([0,T])}) |V(t)|^{2}\nonumber\\
&=&\left(\|a^{\prime}\|_{L^{\infty}([0,T])} +\langle\beta\rangle^{2}\|a\|_{L^{\infty}([0,T])}+\|b\|_{L^{\infty}([0,T])}\right)|V(t)|^{2}.
\end{eqnarray}
Applying  \eqref{eqq4} in \eqref{Ineqen3}, we get
\begin{eqnarray}\label{Inenend}
E_{t}(t)&\leq&c_{0,a}^{-1}\left(\|a^{\prime}\|_{L^{\infty}([0,T])} +\langle\beta\rangle^{2}\|a\|_{L^{\infty}([0,T])}+\|b\|_{L^{\infty}([0,T])}\right)E(t)\nonumber\\
&=&\kappa E(t),
\end{eqnarray}
where $\kappa=c_{0,a}^{-1}\left(\|a^{\prime}\|_{L^{\infty}([0,T])} +\langle\beta\rangle^{2}\|a\|_{L^{\infty}([0,T])}+\|b\|_{L^{\infty}([0,T])}\right)>0$. Applying the Gronwall lemma to \eqref{Inenend}, we deduce that 
\begin{eqnarray}\label{energy}
E(t)
\leq e^{\kappa T}E(0)=e^{c_{0,a}^{-1}\left(\|a^{\prime}\|_{L^{\infty}([0,T])} +\langle\beta\rangle^{2}\|a\|_{L^{\infty}([0,T])}+\|b\|_{L^{\infty}([0,T])}\right)T}E(0).
\end{eqnarray}
Therefore, putting together \eqref{energy} and \eqref{eqq4} we obtain
\begin{eqnarray}\label{c1:eqn1}
c_{0,a}|V(t)|^{2}\leq E(t)&\leq& 
e^{c_{0,a}^{-1}\left(\|a^{\prime}\|_{L^{\infty}([0,T])} +\langle\beta\rangle^{2}\|a\|_{L^{\infty}([0,T])}+\|b\|_{L^{\infty}([0,T])}\right)T}E(0)\nonumber\\
&\leq& c_{1,a}e^{c_{0,a}^{-1}\left(\|a^{\prime}\|_{L^{\infty}([0,T])} +\langle\beta\rangle^{2}\|a\|_{L^{\infty}([0,T])}+\|b\|_{L^{\infty}([0,T])}\right)T}|V(0)|^{2}.\nonumber\\
\end{eqnarray} 
If we set,  $C_{1}=c_{0,a}^{-1}c_{1,a}e^{c_{0,a}^{-1}\left(\|a^{\prime}\|_{L^{\infty}([0,T])}+\|b\|_{L^{\infty}([0,T])}\right)T}$, and  $K_{1}=c_{0,a}^{-1}\|a\|_{L^{\infty}([0,T])}$, then using the definition of $V(t)$ and \eqref{c1:eqn1}, we obtain \eqref{EQ:ODE1}, that is,
$$
|v(t)|^{2}+|v^{\prime}(t)|^{2}\leq 
C_{1}e^{K_{1}T\langle\beta\rangle^{2}}(|v_0|^2+|v_1|^2),
$$ 
finishing the proof of Case 1.

\medskip
\textbf{Case 2:} ~ $a\in \mathcal{C}^{\alpha}([0,T]),$ with $0<\alpha<1$, $a_0:=\min_{[0,T]} a(t)>0.$

\smallskip
Here we follow the
method developed by Colombini and Kinoshita in \cite{ColKi:02}. We look for solutions of the form 
\begin{equation}\label{EQ:vw}
V(t)=e^{-\rho(t)\langle\beta\rangle^{\frac{1}{s}}}(\det H(t))^{-1}H(t)W(t),
\end{equation} 
where 
\begin{itemize}
\item $s\in\mathbb{R}$ depends on $\alpha$ as will be determined in the argument;
\item the function $\rho\in\mathcal{C}^{1}([0,T])$ is real-valued  with $\rho(0)=0$;
\item $W(t)$ is the energy;
\item $H(t)$ is the matrix defined by
$$H(t)=\left( \begin{matrix}
1&1\\
\lambda_{1}^{\epsilon}(t)&\lambda_{2}^{\epsilon}(t)
\end{matrix}   \right),$$
where for all $\epsilon\in(0,1],$ $\lambda_{1}^{\epsilon}(t)$ and $\lambda_{2}^{\epsilon}(t)$ are regularisations of the eigenvalues of the matrix $A(t)$ of the form
$$\lambda_{1}^{\epsilon}(t):=\left(-\sqrt{a}\ast\phi_{\epsilon}\right)(t),$$
$$\lambda_{2}^{\epsilon}(t):=\left(+\sqrt{a}\ast\phi_{\epsilon}\right)(t),$$
with $\{\phi_{\epsilon}(t)\}_{\epsilon>0}$, being a family of cut-off functions defined starting from a non-negative even function $\phi\in\mathcal{C}_{0}^{\infty}(\mathbb{R}),$ with $\int\limits_{\mathbb{R}}\phi=1,$ by setting $\phi_{\epsilon}(t):= \frac{1}{\epsilon}\phi(\frac{1}{\epsilon}).$ By construction, it follows that $\lambda_{1}^{\epsilon},\lambda_{2}^{\epsilon}\in \mathcal{C}^{\infty}([0,T]).$ 
\end{itemize}
We observe the inequality
\begin{eqnarray}\label{determinantes}
\det H(t)=\lambda_{2}^{\epsilon}(t)-\lambda_{1}^{\epsilon}(t) \geq 2\sqrt{a_0}.
\end{eqnarray} 
Moreover, using the H\"{o}lder regularity of $a(t)$ of order $\alpha$ and,  for all $t\in[0,T]$, we have  
\begin{eqnarray}
\left|\lambda_{1}^{\epsilon}(t)+\sqrt{a(t)}\right|&=&\left|\left(-\sqrt{a}\ast\phi_{\epsilon}\right)(t)+\sqrt{a(t)}\right|\nonumber\\
&=&\left|\int_{\mathbb{R}}\sqrt{a(t-y)}\phi_{\epsilon}(y)dy-\sqrt{a(t)}\int_{\mathbb{R}}\phi(y)dy\right|\nonumber\\
&=&\left|\int_{\mathbb{R}}\sqrt{a(t-\epsilon x)}\phi(x)dx-\sqrt{a(t)}\int_{\mathbb{R}}\phi(x)dx\right|\nonumber\\
&\leq&\int_{\mathbb{R}}\frac{|a(t-\epsilon x)-a(t))|}{\sqrt{a(t-\epsilon x)}+\sqrt{a(t)}}\phi(x)dx\nonumber\\
&\leq&\frac{\|a\|_{ {\mathcal C}^{\alpha}([0,T])}}{2\sqrt{a_{0}}} \epsilon^{\alpha}.\nonumber
\end{eqnarray}
Similarly, we can compute $|\lambda_{2}^{\epsilon}(t)-\sqrt{a(t)}|$ and we get 
\begin{eqnarray}\label{lambes1}
\left|\lambda_{1}^{\epsilon}(t)+\sqrt{a(t)}\right|&\leq& c(a)\epsilon^{{\alpha}},\nonumber\\
\left|\lambda_{2}^{\epsilon}(t)-\sqrt{a(t)}\right|&\leq& c(a)\epsilon^{{\alpha}},
\end{eqnarray}
where 
\begin{eqnarray}\label{c(a)}
	c(a)=\frac{\|a\|_{ {\mathcal C}^{\alpha}([0,T])}}{2\sqrt{a_0}}.
\end{eqnarray}
Now substituting the suggested solution \eqref{EQ:vw} in \eqref{Estode}, we get
\begin{multline*}
-\rho^{\prime}(t)\langle\beta\rangle^{\frac{1}{s}}e^{-\rho(t)\langle\beta\rangle^{\frac{1}{s}}}\frac{H(t)W(t)}{\det H(t)}+e^{-\rho(t)\langle\beta\rangle^{\frac{1}{s}}}\frac{H_t(t)W(t)}{\det H(t)}+e^{-\rho(t)\langle\beta\rangle^{\frac{1}{s}}}\frac{H(t)W_t(t)}{\det H(t)} \\
-e^{-\rho(t)\langle\beta\rangle^{\frac{1}{s}}}(\det H)_{t}(t)\frac{H(t)W(t)}{(\det H(t))^{2}}
\\ =i\beta^{2} A(t)e^{-\rho(t)\langle\beta\rangle^{\frac{1}{s}}}\frac{H(t)W(t)}{\det H(t)}+iB(t)e^{-\rho(t)\langle\beta\rangle^{\frac{1}{s}}}\frac{H(t)W(t)}{\det H(t)}. 
\end{multline*} 
Multiplying both sides by $e^{\rho(t)\langle\beta\rangle^{\frac{1}{s}}}\det H(t) H^{-1}(t)$, we get
\begin{multline*}
W_t(t)=\rho^{\prime}(t)\langle\beta\rangle^{\frac{1}{s}}W(t)-H^{-1}(t)H_{t}(t)W(t)+(\det H)_{t}(t)(\det H(t) )^{-1}W(t)
\\
+ i\beta^{2} H^{-1}(t)A(t)H(t)W(t)+i H^{-1}(t)B(t)H(t)W(t).\nonumber
\end{multline*}
This leads to the equality
\begin{eqnarray}
& &\frac{d}{dt}\left|W(t)\right|^2 \nonumber\\
&=& 2{\rm Re}(W_t(t),W(t))\nonumber\\
&=& 2\rho^{\prime}(t)\langle\beta\rangle^{\frac{1}{s}}|W(t)|^{2}-2{\rm Re}\left(H^{-1}(t)H_{t}(t)W(t), W(t)\right)+\nonumber\\
&& 2(\det H(t) )^{-1}(\det H)_{t}(t)|W(t)|^{2}+2\beta^{2} {\rm Im}\left(H^{-1}(t)A(t)H(t)W(t),W(t)\right)+\nonumber\\
&& 2 {\rm Im}\left(H^{-1}(t)B(t)H(t)W(t),W(t)\right).\nonumber
\end{eqnarray}
We note that 
\begin{equation}
2{\rm Im}\left(H^{-1}AHW,W\right)\leq \| H^{-1}AH-(H^{-1}AH)^{\ast}\| \|W\|^{2}.
\end{equation}
Thus, we obtain
\begin{multline}{\label{estimate}}
\frac{d}{dt}\left|W(t)\right|^2\leq  \big(2\rho^{\prime}(t)\langle\beta\rangle^{\frac{1}{s}}+2 \| H^{-1}(t)H_{t}(t) \|+2|\left(\det H(t)\right)^{-1}\left(\det H\right)_{t}(t)|+\big.\\
\big.\langle\beta\rangle^{2} \| H^{-1}AH-(H^{-1}AH)^{\ast}\|+\| H^{-1}BH-(H^{-1}BH)^{\ast}\|\big)\|W\|^{2}.
\end{multline}
Using the techniques from \cite{GR-sublaplacian,Taranto-Ruzhansky-WE-graded}, we estimate the above terms as follows:
\begin{enumerate}
\item $\|H^{-1}(t)H_{t}(t)\|\lesssim { \frac{\|a\|_{{\mathcal C}^{\alpha}([0,T])}}{a_0}}\epsilon^{{\alpha}-1}$, 
\item $|\left(\det H(t)\right)^{-1}\left(\det H\right)_{t}(t)|\lesssim
\frac{\|a\|_{{\mathcal C}^{\alpha}([0,T])}}{a_0} \epsilon^{{\alpha}-1}$, 
\item $\|H^{-1}AH-(H^{-1}AH)^{\ast}\|\lesssim
\frac{\|a\|_{{\mathcal C}^{\alpha}([0,T])}\|\sqrt{a}\|_{L^{\infty}([0,T])}}{a_0}
\epsilon^{{\alpha}}$,
\item $\|H^{-1}BH-(H^{-1}BH)^{\ast}\|\lesssim \langle\beta\rangle^{2}\frac{\|b\|_{L^\infty([0,T])}+\|\sqrt{a}\|^{2}_{L^{\infty}([0,T])}}{\sqrt{a_0}}.$
\end{enumerate}
Indeed, we deal with these four terms as follows: 
\begin{enumerate}
\item Since $H^{-1}(t)=\frac{1}{\lambda^{\epsilon}_2-\lambda^{\epsilon}_1}\left(\begin{matrix}
\lambda^{\epsilon}_2 &-1\\
-\lambda^{\epsilon}_1& 1
\end{matrix}\right)$ and $H_{t}(t)=\left(\begin{matrix}
0 & 0\\
\partial_{t}\lambda^{\epsilon}_1 & \partial_{t}\lambda^{\epsilon}_2
\end{matrix}\right)$,  it follows that the entries of the matrix $H^{-1}H_{t}$ are given by the functions $\frac{\partial_{t}\lambda^{\epsilon}_{j}}{\lambda^{\epsilon}_2-\lambda^{\epsilon}_1}.$ We compute for $j=2$,  i.e.,  $\partial_{t}\lambda^{\epsilon}_{2}(t)$, can be estimated by
\begin{eqnarray}\label{pares}
\left|\partial_{t}\lambda^{\epsilon}_{2}(t)\right|&= &\left|\sqrt{a}\ast\partial_{t}\phi_{\epsilon}(t)\right|=\left|\frac{1}{\epsilon}\int_{\mathbb{R}}\sqrt{a(t-\rho\epsilon)}\phi^{\prime}(\rho)d\rho\right|\nonumber\\
&\leq&\frac{1}{\epsilon}\int_{\mathbb{R}}\left|\sqrt{a(t-\rho\epsilon)}-\sqrt{a(t)}\right|\phi^{\prime}(\rho)d\rho\nonumber\\
&=&\frac{1}{\epsilon}\int_{\mathbb{R}}\frac{|a(t-\rho\epsilon )-a(t)|}{\sqrt{a(t-\rho\epsilon )}+\sqrt{a(t)}}\phi^{\prime}(\rho)d\nonumber\rho\\
&\leq&c(a)\epsilon^{\alpha-1},
\end{eqnarray}
where $c(a)$ given by \eqref{c(a)} comes from the H\"older continuity of $a(t)$, and also using $\int\limits_{\mathbb{R}} \phi^{\prime}=0.$
Then using \eqref{determinantes}, and \eqref{pares}, we get
$$\|H^{-1}(t)H_{t}(t)\|\leq  \frac{\|a\|_{{\mathcal C}^{\alpha}([0,T])}}{4a_0}\epsilon^{\alpha-1}\lesssim  \frac{\|a\|_{{\mathcal C}^{\alpha}([0,T])}}{a_0}\epsilon^{\alpha-1}.$$

\item We can estimate 
$$\left|(\det H(t))^{-1}(\det H)_{t}(t)\right|=\left|\frac{\partial_{t}\lambda^{\epsilon}_2-\partial_{t}\lambda^{\epsilon}_1}{\lambda^{\epsilon}_2-\lambda^{\epsilon}_1}\right|\leq 
\frac{\|a\|_{{\mathcal C}^{\alpha}([0,T])}}{2a_0}\epsilon^{{\alpha}-1}\lesssim \frac{\|a\|_{{\mathcal C}^{\alpha}([0,T])}}{a_0}\epsilon^{{\alpha}-1}.$$

\item In this case, we calculate the matrix that we are interested in, that is,
\begin{equation}\label{EQ:Hs}
H^{-1}AH-(H^{-1}AH)^{\ast}=\frac{1}{\lambda^{\epsilon}_2-\lambda^{\epsilon}_1}\left(\begin{matrix}
0 & -2a+(\lambda^{\epsilon}_1)^{2}+(\lambda^{\epsilon}_2)^{2}\\
2a-(\lambda^{\epsilon}_1)^{2}-(\lambda^{\epsilon}_2)^{2} & 0
\end{matrix}\right).
\end{equation} 
Since $(\lambda^{\epsilon}_1)^{2}=(\lambda^{\epsilon}_2)^{2}$, and recalling inequality \eqref{determinantes}, we see that the desired norm can be estimated if we estimate the function $|a(t)-(\lambda^{\epsilon}_2)^{2}|$ only.
 A straightforward calculation using \eqref{lambes1}, shows that 
 \begin{eqnarray}\label{c2:eq2}
 \left|a(t)-(\lambda^{\epsilon}_2)^{2}\right|&=& \left|\left(\sqrt{a(t)}-(\lambda^{\epsilon}_2)\right)\left(\sqrt{a(t)}+(\lambda^{\epsilon}_2)\right)\right|\nonumber\\
 &\leq& c(a)\epsilon^{{\alpha}} \left|\sqrt{a(t)}+\int_{\mathbb{R}}\sqrt{a(t-s)}\phi_{\epsilon}(s)ds\right|\nonumber\\
 &\leq& 2c(a)\|\sqrt{a}\|_{L^{\infty}([0,T])}\epsilon^{{\alpha}},  
 \end{eqnarray}
 where $c(a)$ given by \eqref{c(a)} comes from the H\"older continuity of $a(t)$.
Using \eqref{c2:eq2}, and \eqref{determinantes}, it follows that
\begin{eqnarray}
	\|H^{-1}AH-(H^{-1}AH)^{\ast}\|&\leq& \frac{2c(a)\|\sqrt{a}\|_{L^{\infty}([0,T])} }{\sqrt{a_{0}}}\epsilon^{{\alpha}}\nonumber\\
	&\lesssim& 
	\frac{\|a\|_{{\mathcal C}^{\alpha}([0,T])}\|\sqrt{a}\|_{L^{\infty}([0,T])}}{a_{0}} \epsilon^{{\alpha}}.
\end{eqnarray}
 
 \item
Next we estimate the term $\|H^{-1}BH-(H^{-1}BH)^{\ast}\|.$
For that, we explicitly write the matrix we are interested in, that is

\begin{equation}\label{c2:m1}
H^{-1}BH-(H^{-1}BH)^{\ast}
= \frac{1}{\lambda^{\epsilon}_2-\lambda^{\epsilon}_1}
\left(\begin{matrix}
0&2(1-\beta^{2})(\lambda_{1}^{\epsilon})^{2}-2b\\
-2(1-\beta^{2})(\lambda_{1}^{\epsilon})^{2}+2b&0
\end{matrix}\right).
\end{equation} 
Consequently, using \eqref{determinantes}, we get
\begin{eqnarray}\label{EQ:estbs}
\|H^{-1}BH-(H^{-1}BH)^{\ast}\|&\leq&2\left(\frac{\|b\|_{L^\infty([0,T])}+\langle\beta\rangle^{2}\|\sqrt{a}\|^{2}_{L^{\infty}([0,T])}}{\sqrt{a_0}}
\right)\nonumber\\
&\lesssim& \langle\beta\rangle^{2}\frac{\|b\|_{L^\infty([0,T])}+\|\sqrt{a}\|^{2}_{L^{\infty}([0,T])}}{\sqrt{a_0}}.
\end{eqnarray} 
\end{enumerate}

Combining estimates (1)-(4) above, we get the estimate for the derivative of the energy, that is
\begin{multline}\label{est.2}
\frac{d}{dt}|W(t)|^2\lesssim \left(2\rho^{\prime}(t)\langle\beta\rangle^{\frac{1}{s}}+
 { \frac{\|a\|_{{\mathcal C}^{\alpha}([0,T])}}{a_0}}\epsilon^{{\alpha}-1} + 
\langle\beta\rangle^{2}\frac{\|a\|_{{\mathcal C}^{\alpha}([0,T])}\|\sqrt{a}\|_{L^{\infty}([0,T])}}{a_0}
\epsilon^{{\alpha}}
\right. \\
+ \left.
 \langle\beta\rangle^{2}\frac{\|b\|_{L^\infty([0,T])}+\|\sqrt{a}\|^{2}_{L^{\infty}([0,T])}}{\sqrt{a_0}}
\right)|W(t)|^2 . 
\end{multline}
We choose $\epsilon=\langle\beta\rangle^{-2}$ and define $\rho(t):=\rho(0)-K_{2}t=-K_{2}t$, for some $K_{2}>0$ to be specified. Substituting it in \eqref{est.2}, we get 
\begin{multline}\label{est3}
 \frac{d}{dt}|W(t)|^2 
\lesssim\left(-2K_{2}\langle\beta\rangle^{\frac{1}{s}}
+  \frac{\|a\|_{{\mathcal C}^{\alpha}([0,T])}}{a_0}\langle\beta\rangle^{2(1-{\alpha})} 
+\frac{\|a\|_{{\mathcal C}^{\alpha}([0,T])}\|\sqrt{a}\|_{L^{\infty}([0,T])}}{a_0}\langle\beta\rangle^{2(1-{\alpha})}
\right. \\
 \quad + \left. 
\langle\beta\rangle^{2}\frac{\|b\|_{L^\infty([0,T])}+\|\sqrt{a}\|^{2}_{L^{\infty}([0,T])}}{\sqrt{a_0}}
\right)|W(t)|^2.
\end{multline}
Since $0<\alpha<1$, this implies
\begin{equation}
 \frac{d}{dt}|W(t)|^2
\lesssim
\left(-2K_{2}\langle\beta\rangle^{\frac{1}{s}}
+  \kappa\langle\beta\rangle^{2}
\right)|W(t)|^2,	
\end{equation}
where
\begin{equation}
	\kappa=\frac{\|a\|_{{\mathcal C}^{\alpha}([0,T])}}{a_0} 
	+\frac{\|a\|_{{\mathcal C}^{\alpha}([0,T])}\|\sqrt{a}\|_{L^{\infty}([0,T])}}{a_0}
	+ 
	\frac{\|b\|_{L^\infty([0,T])}+\|\sqrt{a}\|^{2}_{L^{\infty}([0,T])}}{\sqrt{a_0}}.
\end{equation}
  If we choose $K_{2}=\frac{\kappa}{2}$ and
	$\dfrac{1}{s}\geq 2$, then
for all $t\in[0,T]$ and $\beta>0$, we have 
\begin{equation}\label{EQ:mon-W}
\frac{d}{dt}|W(t)|^2\leq 0.
\end{equation} 
Now by using monotonicity of $|W(t)|$ and $\rho(0)=0$, we get
\begin{equation}\label{case2:w1}
|W(t)|\leq \|H(0)\|^{-1}|\det H(0)||V(0)|.
\end{equation}
Now from \eqref{EQ:vw} and \eqref{case2:w1}, we get
\begin{eqnarray}\label{case2:w2}
|V(t)|&\leq&\|H(t)\|\left|\det(H(t))\right|^{-1}e^{K_{2}t\langle\beta\rangle^{\frac{1}{s}}}\left|W(t)\right|\nonumber\\
&\leq&{\|H(t)\|}{\|H(0)\|^{-1}\left|\det H(t)\right|^{-1}}\left|\det H(0)\right|e^{K_{2}t\langle\beta\rangle^{\frac{1}{s}}}|V(0)|.
\end{eqnarray}
Note that the definition of $\lambda_{1}^{\epsilon}, \lambda_{2}^{\epsilon}$ and 
\eqref{determinantes}
imply
\begin{equation}\label{c2:eq1}
{\|H(t)\|}{\|H(0)\|^{-1}\left|\det H(t)\right|^{-1}}\left|\det H(0)\right|\lesssim 
\frac{\|\sqrt{a}\|^2_{L^\infty([0,T])}}{\sqrt{a_0}}=
\frac{\|{a}\|_{L^\infty([0,T])}}{\sqrt{a_0}}.
\end{equation}
Then \eqref{case2:w2} and \eqref{c2:eq1} allow one to estimate
\begin{eqnarray}\label{c2:eqn1}
|V(t)|
\lesssim \frac{\|{a}\|_{L^\infty([0,T])}}{\sqrt{a_0}} e^{K_{2}T\langle\beta\rangle^{\frac{1}{s}}}|V(0)|.
\end{eqnarray}
If we set, $C_{2}=C^{'}\frac{\|{a}\|_{L^\infty([0,T])}}{\sqrt{a_0}}$, for some absolute constant $C^{'}>0$, then by definition of $V(t)$, we obtain (\ref{odecase2}), that is,
$$|v(t)|^2+|v^{\prime}(t)|^2\leq  C_{2}e^{K_{2}T\langle\beta\rangle^{\frac{1}{s}}}( |v_0|^2+|v_1|^2),$$
finishing the proof of Case 2.

\medskip
\textbf{Case 3:} $a\in\mathcal{C}^{l}([0,T]),$ with $l\geq 2,$ $a(t)\geq 0.$

\smallskip
Consider the quasi-symmetriser of $A(t),$ that is, a family of coercive, Hermitian matrices of the form
\begin{equation}\label{EQ:Q2eps}
Q^{(2)}_{\epsilon}(t):= S(t)+\epsilon^{2}\left(\begin{matrix}1&0\\
0&0\end{matrix}\right)=\left(\begin{matrix}
a(t)&0\\
0&1
\end{matrix} \right)+ \epsilon^2\left(\begin{matrix}1&0\\
0&0\end{matrix}\right),
\end{equation} 
for all $\epsilon\in(0,1],$ so that $(Q^{(2)}_{\epsilon}A-A^{\ast}Q^{(2)}_{\epsilon})$ 
goes to zero as $\epsilon$ goes to zero. The associated perturbed energy will be given by
$$E_{\epsilon}(t):=\left(Q^{(2)}_{\epsilon}(t)V(t), V(t)\right).$$
We proceed by estimating this energy. We have
\begin{eqnarray}
\frac{d}{dt}E_{\epsilon}(t)&=&\left(\frac{d}{dt}Q^{(2)}_{\epsilon}(t)V(t),V(t)\right)+\left(Q^{(2)}_{\epsilon}(t)V_{t}(t),V(t)\right)+\left(Q^{(2)}_{\epsilon}(t)V(t),V_{t}(t)\right)\nonumber\\
&=&\left(\frac{d}{dt}Q^{(2)}_{\epsilon}(t)V(t),V(t)\right)+i\beta^{2}\left(Q^{(2)}_{\epsilon}(t)A(t)V(t),V(t)\right)+\nonumber\\
&& i\left(Q^{(2)}_{\epsilon}(t)B(t)V(t),V(t)\right)-i\beta^{2}\left(Q^{(2)}_{\epsilon}(t)V(t),A(t)V(t)\right)-\nonumber\\
&&i\left(Q^{(2)}_{\epsilon}(t)V(t),B(t)V(t)\right)\nonumber\\
&=&\left(\frac{d}{dt}Q^{(2)}_{\epsilon}(t)V(t),V(t)\right)+i\beta^{2}\left((Q^{(2)}_{\epsilon}(t)A(t)-A^{\ast}(t)Q^{(2)}_{\epsilon}(t))V(t),V(t)\right)+\nonumber\\
&& i\left((Q^{(2)}_{\epsilon}(t)B(t)-B^{\ast}(t)Q^{(2)}_{\epsilon}(t))V(t),V(t)\right).
\end{eqnarray}

Now we will estimate the above terms as follows: 
\begin{enumerate}
	\item $i\left((Q^{(2)}_{\epsilon}(t)A(t)-A^{\ast}(t)Q^{(2)}_{\epsilon}(t))V(t),V(t)\right)\leq \epsilon\left(Q^{(2)}_{\epsilon}(t)V(t), V(t)\right),$
			\item\begin{multline}
			i\left((Q^{(2)}_{\epsilon}(t)B(t)-B^{\ast}(t)Q^{(2)}_{\epsilon}(t))V(t),V(t)\right)\leq\\ \langle\beta\rangle^{2}\left(\|a\|_{L^\infty([0,T])}+\epsilon^{2}+\|b\|_{L^\infty([0,T])}\right)|V(t)|^{2}.
	\end{multline}
\end{enumerate}
Indeed, we will deal with these two terms as follows:
\begin{enumerate}
\item Since $Q^{(2)}_{\epsilon}(t)A(t)-A^{\ast}(t)Q^{(2)}_{\epsilon}(t)=\epsilon^{2}{\tiny\left(\begin{matrix}0&1\\
	-1&0\end{matrix}\right)}$, simple calculations will give
\begin{equation}\label{c3:eqq2}
	\left(Q^{(2)}_{\epsilon}(t)V(t), V(t)\right)=(a(t)+\epsilon^2)|v_{1}|^2+|v_{2}|^2.
\end{equation} 
Now we have	
\begin{eqnarray}\label{c3:eqq4}
i\left((Q^{(2)}_{\epsilon}(t)A(t)-A^{\ast}(t)Q^{(2)}_{\epsilon}(t))V(t),V(t)\right)
&=&i\epsilon^{2}(v_{2}\bar{v_{1}}-v_{1}\bar{v_{2}})\nonumber\\
&=&i\epsilon^{2}2i\text{Im}(v_{2}\bar{v_{1}})\nonumber\\
&\leq& \epsilon2|\epsilon v_{1}||v_{2}|\nonumber\\
&\leq& \epsilon (\epsilon^{2}|v_{1}|^{2}+|v_{2}|^{2})\nonumber\\
&\leq&\epsilon  ((a(t)+\epsilon^{2})|v_{1}|^{2}+|v_{2}|^{2}).
\end{eqnarray}
From \eqref{c3:eqq2} and \eqref{c3:eqq4}, we get
\begin{eqnarray}
i\left((Q^{(2)}_{\epsilon}(t)A(t)-A^{\ast}(t)Q^{(2)}_{\epsilon}(t))V(t),V(t)\right)\leq \epsilon\left(Q^{(2)}_{\epsilon}(t)V(t), V(t)\right).\nonumber
\end{eqnarray}
\item Since $Q^{(2)}_{\epsilon}(t)B(t)-B^{\ast}(t)Q^{(2)}_{\epsilon}(t)=
\left((1-\beta^{2})(a(t)+\epsilon^{2})-b(t)\right)\left(\begin{matrix}0&1\\
	-1&0
	\end{matrix}\right)$, this implies
\begin{multline}\label{c3:eqq5}
i\left((Q^{(2)}_{\epsilon}(t)B(t)-B^{\ast}(t)Q^{(2)}_{\epsilon}(t))V(t),V(t)\right)\\\
\leq	\langle\beta\rangle^{2}\left(\|a\|_{L^\infty([0,T])}+\epsilon^{2}+\|b\|_{L^\infty([0,T])}\right)|V(t)|^{2}.
\end{multline}
\end{enumerate}
Next we estimate the perturbed energy. From \eqref{c3:eqq2}, and for $\epsilon\leq1,$ we  get
\begin{eqnarray}\label{estch1}
\left(Q^{(2)}_{\epsilon}(t)V(t), V(t)\right)&=&(a(t)+\epsilon^2)|v_1|^2+|v_2|^2\nonumber\\
&\leq& (\|a\|_{L^{\infty}([0,T])}+1)(|v_{1}|^{2}+|v_{2}|^{2})\nonumber\\
&=& (\|a\|_{L^{\infty}([0,T])}+1)|V(t)|^{2},
\end{eqnarray}
and
\begin{eqnarray}\label{estch2}
\left(Q^{(2)}_{\epsilon}(t)V(t), V(t)\right)&=&(a(t)+\epsilon^2)|v_1|^2+|v_2|^2\nonumber\\
&\geq& \epsilon^{2}|v_{1}|^{2}+\frac{\epsilon^{2}}{\|a\|_{L^{\infty}([0,T])}+1}|v_{2}|^{2}\nonumber\\
&\geq&\frac{\epsilon^{2}}{\|a\|_{L^{\infty}([0,T])}+1}|v_{1}|^{2}+\frac{\epsilon^{2}}{\|a\|_{L^{\infty}([0,T])}+1}|v_{2}|^{2}\nonumber\\
&=&c_{1}^{-1}\epsilon^{2}|V(t)|^{2}.
\end{eqnarray}
From \eqref{estch1} and \eqref{estch2}, we get
\begin{equation}\label{estch3}
c^{-1}_{1}\epsilon^{2}|V(t)|^{2}\leq \left(Q^{(2)}_{\epsilon}(t)V(t), V(t)\right)\leq c_1|V(t)|^{2},
\end{equation}
where $c_{1}=\|a\|_{L^{\infty}([0,T])}+1.$\\

 Using the above estimates, we have
\begin{eqnarray}\label{c3:eq1}
\frac{d}{dt}E_{\epsilon}(t)&\leq&\left(\frac{d}{dt}Q^{(2)}_{\epsilon}(t)V(t),V(t)\right)+\langle\beta\rangle^{2}\epsilon\left(Q^{(2)}_{\epsilon}(t)V(t), V(t)\right)+\nonumber\\
&&\langle\beta\rangle^{2}\left(\|a\|_{L^\infty([0,T])}+\epsilon^{2}+
\|b\|_{L^\infty([0,T])}\right)|V(t)|^{2}\nonumber\\
&\leq&\left(\frac{d}{dt}Q^{(2)}_{\epsilon}(t)V(t), V(t)\right)+\langle\beta\rangle^{2}\epsilon E_{\epsilon}(t)+\nonumber\\
&&c_1\epsilon^{-2}\langle\beta\rangle^{2}\left(\|a\|_{L^\infty([0,T])}+\epsilon^{2}+
\|b\|_{L^\infty([0,T])}\right)E_{\epsilon}(t)\nonumber\\
&=&\left(\frac{d}{dt}Q^{(2)}_{\epsilon}(t)V(t), V(t)\right)+\langle\beta\rangle^{2}\epsilon E_{\epsilon}(t)
+c_1\epsilon^{-2}\langle\beta\rangle^{2}\|a\|_{L^\infty([0,T])}E_{\epsilon}(t)+\nonumber\\
&&c_{1}\langle\beta\rangle^{2}E_{\epsilon}(t)+c_1\epsilon^{-2}\langle\beta\rangle^{2}\|b\|_{L^\infty([0,T])}E_{\epsilon}(t),
\end{eqnarray}
which gives
\begin{multline}\label{c3:mains}
	\frac{d}{dt}E_{\epsilon}(t)
	\leq \Bigg[ \frac{\left(\frac{d}{dt}Q^{(2)}_{\epsilon}(t)V(t), V(t)\right)}{\left(Q^{(2)}_{\epsilon}(t)V(t), V(t)\right)}+\langle\beta\rangle^{2}\epsilon+c_1\epsilon^{-2}\langle\beta\rangle^{2}(\|a\|_{L^\infty([0,T])}+\|b\|_{L^{\infty}([0,T])})\Bigg.\\
	\Bigg.+ c_{1}\langle\beta\rangle^{2}\Bigg] E_{\epsilon}(t),
\end{multline}
where $c_{1}=\|a\|_{L^{\infty}([0,T])}+1.$ We will use the following estimate:
\begin{equation}\label{ChRuest}
\int_{0}^{T}\frac{\left(\frac{d}{dt}Q^{(2)}_{\epsilon}(t)V(t), V(t)\right)}{\left(Q^{(2)}_{\epsilon}(t)V(t), V(t)\right)}dt\leq c_{T}c_{1}\epsilon^{-\frac{2}{l}}\|a\|_{\mathcal{C}^{l}([0,T])}^{\frac{1}{l}},
\end{equation}
where $c_{1}=\|a\|_{L^{\infty}([0,T])}+1$ for some $c_T>0$, depending only on $T$, which we assume for a moment. From \eqref{c3:mains} and \eqref{ChRuest}, we get
\begin{eqnarray}\label{c3:eq2}
	&&\frac{d}{dt}E_{\epsilon}(t)\nonumber\\&\leq& \left( c_{T}c_{1}\epsilon^{-\frac{2}{l}}\|a\|_{\mathcal{C}^{l}([0,T])}^{\frac{1}{l}}+\langle\beta\rangle^{2}\epsilon+
	c_1\epsilon^{-2}\langle\beta\rangle^{2}(\|a\|_{L^\infty([0,T])}+\|b\|_{L^{\infty}([0,T])})+c_{1}\langle\beta\rangle^{2}\right) E_{\epsilon}(t)\nonumber\\
	&=&\kappa E_{\epsilon}(t),
\end{eqnarray} 
where $\kappa= c_{T}c_{1}\epsilon^{-\frac{2}{l}}\|a\|_{\mathcal{C}^{l}([0,T])}^{\frac{1}{l}}+\langle\beta\rangle^{2}\epsilon+
c_1\epsilon^{-2}\langle\beta\rangle^{2}(\|a\|_{L^\infty([0,T])}+\|b\|_{L^{\infty}([0,T])})+c_{1}\langle\beta\rangle^{2}$.
Applying the Gronwall lemma on \eqref{c3:eq2}, we deduce that
\begin{multline}
	E_{\epsilon}(t)\leq e^{\kappa T}E_{\epsilon}(0)\\ =E_{\epsilon}(0)e^{\left(c_{T}c_{1}\epsilon^{-\frac{2}{l}}\|a\|_{\mathcal{C}^{l}([0,T])}^{\frac{1}{l}}+\langle\beta\rangle^{2}\epsilon+
		c_1\epsilon^{-2}\langle\beta\rangle^{2}(\|a\|_{L^\infty([0,T])}+\|b\|_{L^{\infty}([0,T])})+c_{1}\langle\beta\rangle^{2}\right)T}.
\end{multline}
Combining this inequality with \eqref{estch3}, we get
\begin{multline}
c^{-1}_{1}\epsilon^{2}|V(t)|^{2}\leq E_{\epsilon}(t)
\leq\\ c_1 
e^{\left(c_{T}c_{1}\epsilon^{-\frac{2}{l}}\|a\|_{\mathcal{C}^{l}([0,T])}^{\frac{1}{l}}+\langle\beta\rangle^{2}\epsilon+
	c_1\epsilon^{-2}\langle\beta\rangle^{2}(\|a\|_{L^\infty([0,T])}+\|b\|_{L^{\infty}([0,T])})+c_{1}\langle\beta\rangle^{2}\right)T}|V(0)|^2.	
\end{multline}
Choosing, $\epsilon^{-\frac{2}{l}}=\langle\beta\rangle^{2}\epsilon$ and setting $\sigma=1+\frac{l}{2}$, we get $\epsilon=\langle\beta\rangle^{-\frac{l}{\sigma}}\leq 1$ for all $\beta>0$ and
$\epsilon^{-\frac{2}{l}}=\langle\beta\rangle^{\frac{2}{\sigma}}$, so that
\begin{eqnarray}\label{c3:eqn1}
&&|V(t)|^{2}\nonumber\\
&\leq& c_1^2 \langle\beta\rangle^{\frac{2l}{\sigma}}e^{\left((c_{T}c_{1}\|a\|_{\mathcal{C}^{l}([0,T])}^{\frac{1}{l}}+1)\langle\beta\rangle^{\frac{2}{\sigma}}+c_{1}\langle\beta\rangle^{2+\frac{2l}{\sigma}}(\|a\|_{L^{\infty}([0,T])} +\|b\|_{L^{\infty}([0,T])})
	+c_{1}\langle\beta\rangle^{2}\right)T}|V(0)|^{2}\nonumber\\
&=&c_{1}^{2}\langle\beta\rangle^{4-\frac{4}{\sigma}}e^{\left((c_{T}c_{1}\|a\|_{\mathcal{C}^{l}([0,T])}^{\frac{1}{l}}+1)\langle\beta\rangle^{\frac{2}{\sigma}}+c_{1}\langle\beta\rangle^{6-\frac{4}{\sigma}}(\|a\|_{L^{\infty}([0,T])} +\|b\|_{L^{\infty}([0,T])})
	+c_{1}\langle\beta\rangle^{2}\right)T}|V(0)|^{2}\nonumber\\
&\leq& c_{1}^{2}e^{\left(1+c_{1}\left(c_{T}\|a\|_{\mathcal{C}^{l}([0,T])}^{\frac{1}{l}}+c_{1}^{-1}+\|a\|_{L^{\infty}([0,T])} +\|b\|_{L^{\infty}([0,T])}
	+1\right)T\right)\langle\beta\rangle^{6-\frac{4}{\sigma}}}|V(0)|^{2}\nonumber\\
&=&C_{3}e^{K_{3}\langle\beta\rangle^{6-\frac{4}{\sigma}}}|V(0)|^{2}.
\end{eqnarray}
If we set  $K_{3}=1+c_{1}\left(c_{T}\|a\|_{\mathcal{C}^{l}([0,T])}^{\frac{1}{l}}+c_{1}^{-1}+\|a\|_{L^{\infty}([0,T])} +\|b\|_{L^{\infty}([0,T])}
+1\right)T$ and $C_{3}=c_{1}^{2}$, then by the definition of $V(t)$ we get 

\begin{equation}
|v(t)|^{2}+|v'(t)|^{2}\leq C_{3}e^{K_{3}\langle\beta\rangle^{6-\frac{4}{\sigma}}}
	(|v_{0}|^{2}+|v_{1}|^{2}).	
\end{equation}
This gives \eqref{odecase3}. It still remains to show \eqref{ChRuest}. We can obtain this estimate by an argument from \cite{DS}.
Indeed, using \eqref{EQ:Q2eps}, for $V=(V_1, V_2)^T$, one readily obtains
\begin{equation}\label{EQ:Q1}
|\left(Q_{\epsilon}^{(2)}(t)V,V\right)|=a(t)|V_1|^2+|V_2|^2+\epsilon^2|V_1|^2,
\end{equation}
and
\begin{equation}\label{EQ:Q2}
\left(\frac{d}{dt}Q^{(2)}_{\epsilon}(t)V,V\right)=a^{\prime}(t)|V_1|^2.
\end{equation}
From \eqref{EQ:Q1} we have
\begin{equation}\label{estch}
c^{-1}_{1}\epsilon^{2}|V(t)|^{2}\leq \left(Q^{(2)}_{\epsilon}(t)V(t), V(t)\right)\leq c_1|V(t)|^{2},
\end{equation}
where $c_{1}=\|a\|_{L^{\infty}([0,T])}+1.$ At the same time
\begin{eqnarray}\label{c3:eqqq1}
\int_{0}^{T}\frac{\left(\frac{d}{dt}Q^{(2)}_{\epsilon}(t)V(t), V(t)\right)}{\left(Q^{(2)}_{\epsilon}(t)V(t), V(t)\right)}dt&=&\int_{0}^{T}\frac{\left(\frac{d}{dt}Q^{(2)}_{\epsilon}(t)V(t), V(t)\right)}{\left(Q^{(2)}_{\epsilon}(t)V(t), V(t)\right)^{1-\frac{1}{l}}\left(Q^{(2)}_{\epsilon}(t)V(t), V(t)\right)^{\frac{1}{l}}}dt\nonumber\\
&\leq&\int_{0}^{T}\frac{\left(\frac{d}{dt}Q^{(2)}_{\epsilon}(t)V(t), V(t)\right)}{\left(Q^{(2)}_{\epsilon}(t)V(t), V(t)\right)^{1-\frac{1}{l}}(c_1^{-1}\epsilon^{2})^{\frac{1}{l}}|V(t)|^{\frac{2}{l}}}dt\nonumber\\
&\leq& c_1\epsilon^{-\frac{2}{l}}\int_{0}^{T}\frac{\left|\left(\frac{d}{dt}Q^{(2)}_{\epsilon}(t)V(t), V(t)\right)\right|}{\left(Q^{(2)}_{\epsilon}(t)V(t), V(t)\right)^{1-\frac{1}{l}}|V(t)|^{\frac{2}{l}}}dt.
\end{eqnarray}
Next we try to find the estimate of
\begin{equation}\label{est1}
\int_{0}^{T}\frac{\left|\left(\frac{d}{dt}Q^{(2)}_{\epsilon}(t)V(t), V(t)\right)\right|}{\left(Q^{(2)}_{\epsilon}(t)V(t), V(t)\right)^{1-\frac{1}{l}}|V(t)|^{\frac{2}{l}}}dt.
\end{equation}
The above case \eqref{est1}, is a special case of a general estimate
\begin{equation}\label{refest}
\int^{T}_{0}\frac{|f^{\prime}(t)|}{|f(t)|^{1-\frac{1}{l}}}dt\leq c_{T}\|f\|^{\frac{1}{l}}_{\mathcal{C}^{l}([0,T])},
\end{equation}
which holds for any real or complex-valued function $f\in\mathcal{C}^{l}([0,T]).$ Since $Q_{\epsilon}^{(2)}(t)$ is a diagonal matrix, an estimate for \eqref{est1} can be obtained directly by applying \eqref{refest} to each of the entries of $Q_{\epsilon}^{(2)}(t)$.
Then noting that $|V|^2=|V_1|^2+|V_2|^2,$ we get
\begin{eqnarray}\label{est2}
\int_{0}^{T}\frac{|\left(\frac{d}{dt}Q^{(2)}_{\epsilon}(t)V(t), V(t)\right)|}{\left(Q^{(2)}_{\epsilon}(t)V(t), V(t)\right)^{1-\frac{1}{l}}|V(t)|^{\frac{2}{l}}}dt&\leq& 
\int_{0}^{T}\frac{a^{\prime}(t)|V_1(t)|^2}{(a(t)+\epsilon^2)|V_1|^2)^{1-\frac{1}{l}}|V(t)
|^{\frac{2}{l}}}dt \nonumber\\
&\leq& \int_{0}^{T} \frac{|a^{\prime}(t)| |V_1(t)|^2}{({a(t)+\epsilon^2})^{1-\frac{1}{l}}|V_1(t)|^{2(1-\frac{1}{l})}|V_1(t)|^{\frac{2}{l}}}dt\nonumber\\
&\leq& \int_{0}^{T} \frac{|a^{\prime}(t)|}{({a(t)+\epsilon^2})^{1-\frac{1}{l}}}dt\nonumber\\
&\leq& \int_{0}^{T} \frac{|a^{\prime}(t)|}{{|a(t)|}^{1-\frac{1}{l}}}dt\nonumber\\
&\leq& c_{T}\|a\|_{\mathcal{C}^{l}([0,T])}^{\frac{1}{l}}.
\end{eqnarray} 
Using \eqref{est2} in \eqref{c3:eqqq1} we get
\begin{equation}\label{c3:main}
\int_{0}^{T}\frac{\left(\frac{d}{dt}Q^{(2)}_{\epsilon}(t)V(t), V(t)\right)}{\left(Q^{(2)}_{\epsilon}(t)V(t), V(t)\right)}dt \leq c_1 c_{T}\epsilon^{-\frac{2}{l}}\|a\|_{\mathcal{C}^{l}([0,T])}^{\frac{1}{l}},
\end{equation}
yielding \eqref{ChRuest} where $c_{1}=\|a\|_{L^{\infty}([0,T])}+1$.

\medskip
\textbf{Case 4: $a\in \mathcal{C}^{\alpha}([0,T])$, with $0<\alpha<2$, $a(t)\geq 0.$}

\smallskip
In this last case we extend the proof of Case 2. However, under these assumptions the eigenvalues of the matrix $A(t)$, that are $\pm\sqrt{a(t)}$ might coincide, and hence they are H\"{o}lder continuous of order $\frac{\alpha}2$ instead of $\alpha.$ 
In order to adapt this proof to the one for Case 2, and to simplify the notation, we assume without loss of generality that $a\in\mathcal{C}^{2\alpha}([0,T])$ with $0<\alpha<1,$ so that $\sqrt{a}\in\mathcal{C}^{\alpha}([0,T]).$ 
Then we change $\alpha$ into $\frac\alpha{2}$ in the final statement.

We look again for solutions of the form
$$V(t)=e^{-\rho(t)\langle\beta\rangle^{\frac{1}{s}}}(\det H(t))^{-1}H(t)W(t),$$ with the real valued function $\rho(t),$ the exponent $s$ and the energy $W(t)$ to be chosen later, while $H(t)$ is the matrix given by
$$H(t)=\left( \begin{matrix}
1&1\\
\lambda_{1,\alpha}^{\epsilon}(t)&\lambda_{2,\alpha}^{\epsilon}(t)
\end{matrix}   \right),$$
where the regularised eigenvalues of $A(t)$ are $\lambda_{1,\alpha}^{\epsilon}(t)$ and $\lambda_{2,\alpha}^{\epsilon}(t)$ differ from the ones defined in the previous case in the following way,
$$\lambda_{1,\alpha}^{\epsilon}(t):=\left(-\sqrt{a}\ast\phi_{\epsilon}\right)(t)+\epsilon^{\alpha},$$
$$\lambda_{2,\alpha}^{\epsilon}(t):=\left(+\sqrt{a}\ast\phi_{\epsilon}\right)(t)+2\epsilon^{\alpha}.$$
Arguing as in Case 2, we have 
\begin{equation}{\label{c1}}
 \det H(t)=\lambda_{2,\alpha}^{\epsilon}(t)-\lambda_{1,\alpha}^{\epsilon}(t)\geq \epsilon^{\alpha},
\end{equation}
 and also, similar to \eqref{lambes1}, we have
 \begin{eqnarray}
 \left|\lambda_{1,\alpha}^{\epsilon}(t)+\sqrt{a(t)}\right|&=&\left|\left(-\sqrt{a}\ast\phi_{\epsilon}\right)(t)+\sqrt{a(t)}+\epsilon^{\alpha}\right|\nonumber\\
 &=&\left|\int_{\mathbb{R}}\sqrt{a(t-y)}\phi_{\epsilon}(y)dy-\sqrt{a(t)}\int_{\mathbb{R}}\phi(y)dy-\epsilon^{\alpha}\right|\nonumber\\
 &=&\left|\int_{\mathbb{R}}\left(\sqrt{a(t-\epsilon x)}-\sqrt{a(t)}\right)\phi(x)dx-\epsilon^{\alpha}\right|\nonumber\\
 &\leq&\left(\|\sqrt{a}\|_{ {\mathcal C}^{\alpha}([0,T])}+1\right)\epsilon^{\alpha}.
 \end{eqnarray}
 Similarly, we can compute $\left|\lambda_{2,\alpha}^{\epsilon}(t)-\sqrt{a(t)}\right|$, and we get 
\begin{equation}{\label{c2}} \left|\lambda^{\epsilon}_{1,\alpha}(t)+\sqrt{a(t)}\right|\leq c_{1}\epsilon^{\alpha},\end{equation}
 \begin{equation}{\label{c3}}
 \left|\lambda^{\epsilon}_{2,\alpha}(t)-\sqrt{a(t)}\right|\leq c_{2}\epsilon^{\alpha},
\end{equation}
with $c_{1}= \|\sqrt{a}\|_{ {\mathcal C}^{\alpha}([0,T])}+1$ and $c_{2}= \|\sqrt{a}\|_{ {\mathcal C}^{\alpha}([0,T])}+2.$
Now we look at the energy estimates:  
\begin{align}{\label{estimate1}}
\frac{d}{dt}|W(t)|^2&\leq  (2\rho^{\prime}(t)\langle\beta\rangle^{\frac{1}{s}}+2 \| H^{-1}(t)H_{t}(t) \|+2|\left(\det H(t)\right)^{-1}\left(\det H\right)_{t}(t)|+\nonumber\\
&\langle\beta\rangle^{2} \| H^{-1}AH-(H^{-1}AH)^{\ast}\|+\| H^{-1}BH-(H^{-1}BH)^{\ast}\|)\|W\|^{2}.
\end{align}
In the present setting, inequality \eqref{determinantes} is replaced by
\begin{equation}\label{determinantes2}
\det H(t)=\lambda_{2}^{\epsilon}(t)-\lambda_{1}^{\epsilon}(t)\geq  \epsilon^\alpha.
\end{equation} 
Taking this into account, the estimates (1) and (2) in Case 2 are replaced by
\begin{enumerate}
\item $\|H^{-1}(t)H_{t}(t)\|\lesssim \|\sqrt{a}\|_{{\mathcal C}^{\alpha}([0,T])}\epsilon^{-1}$,
\item $|\left(\det H(t)\right)^{-1}\left(\det H\right)_{t}(t)|\lesssim\|\sqrt{a}\|_{{\mathcal C}^{\alpha}([0,T])}\epsilon^{-1}$.
\end{enumerate}

We now will estimate $\|(H^{-1}AH)-(H^{-1}AH)^{\ast}\|.$ First, we  explicitly write this matrix, recalling \eqref{EQ:Hs}, that is 
$$H^{-1}AH-(H^{-1}AH)^{\ast}=\frac{1}{\lambda_{2,\alpha}^{\epsilon}-\lambda_{1,\alpha}^{\epsilon}}\left(\begin{matrix}
0 & -2a+(\lambda_{2,\alpha}^{\epsilon})^2+(\lambda_{1,\alpha}^{\epsilon})^2\\
2a-(\lambda_{2,\alpha}^{\epsilon})^2-(\lambda_{1,\alpha}^{\epsilon})^2&0
\end{matrix}\right).$$
We consider the functions $\left|a(t)-(\lambda_{2,\alpha}^{\epsilon})^2\right|$ and $\left|a(t)-(\lambda_{1,\alpha}^{\epsilon})^2\right|.$
A straightforward estimate, using \eqref{c2} and \eqref{c3}, gives
\begin{eqnarray}
\left|a(t)-(\lambda_{2,\alpha}^{\epsilon})^{2}\right|&=&\left|(\sqrt{a(t)}-\lambda_{2,\alpha}^{\epsilon})(\sqrt{a(t)}+\lambda_{2,\alpha}^{\epsilon})\right|\nonumber\\
&\leq& c_{2}\epsilon^{\alpha}\left|(\sqrt{a(t)}+\lambda_{2,\alpha}^{\epsilon})\right|\nonumber\\
&=& c_{2}\epsilon^{\alpha}\left|\sqrt{a(t)}+\int_{\mathbb{R}}\sqrt{a(t-s\epsilon)}\phi(s)ds+2\epsilon^{\alpha}\right|\nonumber\\
&=&c_{2}\epsilon^{\alpha}\left|\int_{\mathbb{R}}\left(\sqrt{a(t)}-\sqrt{a(t+s\epsilon)}\right)\phi(s)ds+2\epsilon^{\alpha}\right|\nonumber\\
&\leq& c_{2}(2+\|\sqrt{a}\|_{{\mathcal C}^\alpha([0,T])})\epsilon^{2\alpha}\nonumber\\
&=&(2+\|\sqrt{a}\|_{{\mathcal C}^\alpha([0,T])})^{2}\epsilon^{2\alpha}.
\end{eqnarray}
Similarly, we have
\begin{eqnarray}
\left|a(t)-(\lambda_{1,\alpha}^{\epsilon})^2\right|
&\leq&(1+\|\sqrt{a}\|_{{\mathcal C}^\alpha([0,T])})^{2}\epsilon^{2\alpha}.
\end{eqnarray}
 Combining, the above estimates, we have
\begin{equation}\label{K3}
\left|a(t)-(\lambda_{i,\alpha}^{\epsilon})^2\right|\leq (2+\|\sqrt{a}\|_{ {\mathcal C}^{\alpha}([0,T])})^{2}\epsilon^{2\alpha}.
\end{equation}
It follows using \eqref{determinantes2}, that 
$$
\|(H^{-1}AH)-(H^{-1}AH)^{\ast}\|\lesssim (2+\|\sqrt{a}\|_{ {\mathcal C}^{\alpha}([0,T])})^{2}\epsilon^{\alpha}.$$
Now, we will estimate $\|H^{-1}BH-(H^{-1}BH)^{\ast}\|.$ Consider the matrix,
\begin{multline}\label{c4:eq4}
H^{-1}BH-(H^{-1}BH)^{\ast}=\\
{\tiny \frac{1}{\lambda^{\epsilon}_{2,\alpha}-\lambda^{\epsilon}_{1,\alpha}}
\left(\begin{matrix}
0&-2b+(1-\beta^{2})\left((\lambda_{1,\alpha}^{\epsilon})^{2}+(\lambda_{2,\alpha}^{\epsilon})^{2}\right)\\
2b-(1-\beta^{2})\left((\lambda_{1,\alpha}^{\epsilon})^{2}+(\lambda_{2,\alpha}^{\epsilon})^{2}\right)&0
\end{matrix}\right)}.
\end{multline} 
Since, $\lambda_{2,\alpha}^{\epsilon}+\lambda_{1,\alpha}^{\epsilon}=3\epsilon^{\alpha}$ and $\lambda_{2,\alpha}^{\epsilon}-\lambda_{1,\alpha}^{\epsilon}\geq\epsilon^{\alpha}$, this implies $\left(\lambda_{2,\alpha}^{\epsilon}\right)^{2}\geq \left(\lambda_{1,\alpha}^{\epsilon}\right)^{2}$, and 
\begin{eqnarray}\label{c4:eq5}
\left(\lambda_{1,\alpha}^{\epsilon}\right)^{2}+\left(\lambda_{2,\alpha}^{\epsilon}\right)^{2}&\leq&2\left(\lambda_{2,\alpha}^{\epsilon}\right)^{2}\nonumber\\
&=&2\left(\left(+\sqrt{a}\ast\phi_{\epsilon}\right)(t)+2\epsilon^{\alpha}\right)^{2}\nonumber\\
&=&2\left(\int\sqrt{a(t-s\epsilon)}\phi(s)ds+2\epsilon^{\alpha}\right)^{2}\nonumber\\
&\leq&2\left(\sqrt{a(t)}+\int\sqrt{a(t-s\epsilon)}\phi(s)ds+2\epsilon^{\alpha}\right)^{2}\nonumber\\
&=&2\left(\int\left(\sqrt{a(t)}-\sqrt{a(t+s\epsilon)}\right)\phi(s)ds+2\epsilon^{\alpha}\right)^{2}\nonumber\\
&\leq&2\left(\|\sqrt{a}\|_{{\mathcal C}^\alpha([0,T])}\epsilon^{\alpha}+2\epsilon^{\alpha}\right)^{2}\nonumber\\
&=&2\epsilon^{2\alpha}\left(\|\sqrt{a}\|_{{\mathcal C}^\alpha([0,T])}+2\right)^{2}.\nonumber\\
\end{eqnarray}
Using \eqref{c4:eq4} and \eqref{c4:eq5}, we can estimate
\begin{eqnarray}\label{EQ:estbs2}
\|H^{-1}BH-(H^{-1}BH)^{\ast}\|&\leq& 2\left(\langle\beta\rangle^{2}\left(\|\sqrt{a}\|_{{\mathcal C}^\alpha([0,T])}+2\right)^{2}\epsilon^{\alpha}+\|b\|_{L^\infty([0,T])}\epsilon^{-\alpha}\right)\nonumber\\
&\lesssim& \langle\beta\rangle^{2}\left(\|\sqrt{a}\|_{{\mathcal C}^\alpha([0,T])}+2\right)^{2}\epsilon^{\alpha}+\|b\|_{L^\infty([0,T])}\epsilon^{-\alpha}.
\end{eqnarray} 
Using these estimates in \eqref{estimate1}, we get
\begin{multline}{\label{estimate2}}
\frac{d}{dt}|W(t)|^2
\lesssim ( 2\rho^{\prime}(t)\langle\beta\rangle^{\frac{1}{s}}+ 2\|\sqrt{a}\|_{{\mathcal C}^{\alpha}([0,T])}\epsilon^{-1}
+2\langle\beta\rangle^{2} (2+\|\sqrt{a}\|_{ {\mathcal C}^{\alpha}([0,T])})^{2}\epsilon^{\alpha}\nonumber\\
 +\|b\|_{L^\infty([0,T])}\epsilon^{-\alpha})\|W\|^{2}.\nonumber
\end{multline}
We choose $\epsilon^{-1}=\langle\beta\rangle^{2}\epsilon^{\alpha}$, which yields $\epsilon=\langle\beta\rangle^{\frac{-2}{1+\alpha}}\leq 1$, for all $\beta>0$. 
Let us now define $\rho(t):=\rho(0)-K_{4}T$ with $K_{4}>0$, to be chosen later, which gives
\begin{multline}
\frac{d}{dt}|W(t)|^2\lesssim( -2K_{4}\langle\beta\rangle^{\frac{1}{s}}+ \|\sqrt{a}\|_{{\mathcal C}^{\alpha}([0,T])}\langle\beta\rangle^{2\gamma}
+(2+\|\sqrt{a}\|_{ {\mathcal C}^{\alpha}([0,T])})^{2}\langle\beta\rangle^{2\gamma}\\
+\|b\|_{L^\infty([0,T])}\langle\beta\rangle^{2(1-\gamma)})\|W\|^{2},
\end{multline}
where $\gamma=\frac{1}{\alpha+1}$. Since $0<\alpha<1$, this implies $\frac{1}{2}<\gamma<1$. Therefore  
\begin{equation}\label{c4:eq1}
	\frac{d}{dt}|W(t)|^2
	\lesssim
	\left(-2K_{4}\langle\beta\rangle^{\frac{1}{s}}
	+  \kappa\langle\beta\rangle^{2\gamma}
	\right)|W(t)|^2,	
\end{equation}
where $\kappa=\|\sqrt{a}\|_{{\mathcal C}^{\alpha}([0,T])}+(2+\|\sqrt{a}\|_{ {\mathcal C}^{\alpha}([0,T])})^{2}
+\|b\|_{L^\infty([0,T])}$.\\

 If we choose $K_{4}=\frac{\kappa}{2}$ and $\frac{1}{s}\geq2\gamma$=$\frac{2}{\alpha+1}$, then
for all $t\in[0,T]$ and $\beta>0$, we have 
$$\frac{d}{dt}|W(t)|^2\leq 0.$$
Then similar to the Case 2, this monotonicity of the energy $W(t)$  yields the  bound of the solution vector $V(t)$ as:
\begin{equation}
|V(t)|\leq {\|H(t)\|}{\|H(0)\|^{-1}\left|\det H(t)\right|^{-1}}\left|\det H(0)\right|e^{K_{4}T\langle\beta\rangle^{\frac{1}{s}}}|V(0)|.
\end{equation}
Using \eqref{determinantes2}, we have
$${\|H(t)\|}{\|H(0)\|^{-1}\left|\det H(t)\right|^{-1}}\left|\det H(0)\right|\lesssim
(2+ \|\sqrt{a}\|_{ {\mathcal C}^{\alpha}([0,T])})^2
.$$
So we have the inequality
\begin{eqnarray}\label{c4:eqn1}
|V(t)|\lesssim (2+ \|\sqrt{a}\|_{ {\mathcal C}^{\alpha}([0,T])})^2
e^{K_{4}T\langle\beta\rangle^{\frac{1}{s}}}|V(0)|.
\end{eqnarray}
If we replace $\alpha$  by $\frac{\alpha}{2}$ and set  $C_{4}=C^{'}(2+ \|\sqrt{a}\|_{ {\mathcal C}^{\frac{\alpha}{2}}([0,T])})^2$, then by definition of $V(t)$, we get 
\begin{eqnarray}
|v(t)|^2+|v^{\prime}(t)|^2\leq  C_{4}e^{K_{4}T\langle\beta\rangle^{\frac{1}{s}}}( |v_0|^2+|v_1|^2).
\end{eqnarray}
This completes the proof of Case 4, with $0<s\leq \frac{\alpha+2}{4}$.
\end{proof}

By the standard methods of ordinary differential equations, Proposition \ref{odelem}, has the following corollary for the corresponding inhomogeneous equations.
\begin{corollary}\label{eqq11}
	Let $T>0$ and $\beta>0$ be positive constants, let $b=b(t)\geq 0$, and $g(t)$ be bounded real-valued functions in $L^{\infty}([0,T])$, and let $a\not\equiv 0$ be a function satisfying one of the conditions of Proposition \ref{odelem}.
	Consider the following Cauchy problem:
	\begin{equation}\label{eqq12}
		\left\{
		\begin{array}{ll}
			w^{\prime\prime}(t)+\beta^{2}a(t)w(t)+b(t)w(t)=g(t), \quad t\in(0,T],\\
			w(0)=w_{0}\in\mathbb{C},\\
			w^{\prime}(0)=w_1\in\mathbb{C}.
		\end{array}
		\right.
	\end{equation}
	Let $v(t)$ be the solution of \eqref{eqq12}, with $g=0$, that is, it satisfies \eqref{ode}, and the respective estimates in Proposition \ref{odelem}. Then we have the estimate 
	
	\begin{equation}
		|w(t)|^{2}+|w^{\prime}(t)|^{2} \lesssim 
		|v(t)|^{2}+|v^{\prime}(t)|^{2} 
		+
		(1+\beta^{2})^{2}\|g\|^2_{L^{2}([0,T])},
	\end{equation}
for all $t\in[0,T]$, with the constant independent of $\beta$.
\end{corollary}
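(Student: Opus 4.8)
The plan is to treat \eqref{eqq12} by Duhamel's principle, comparing it with the homogeneous problem \eqref{ode}. Let $v$ be the solution of \eqref{ode} with Cauchy data $v_0=w_0$, $v_1=w_1$; by hypothesis it satisfies the relevant estimate of Proposition \ref{odelem}, which is why $|v(t)|^2+|v'(t)|^2$ is allowed on the right-hand side. Set $z:=w-v$. Then $z$ solves
\begin{equation*}
z''(t)+\beta^2 a(t)z(t)+b(t)z(t)=g(t),\qquad z(0)=z'(0)=0,
\end{equation*}
and, since $|w(t)|^2+|w'(t)|^2\le 2\bigl(|v(t)|^2+|v'(t)|^2\bigr)+2\bigl(|z(t)|^2+|z'(t)|^2\bigr)$, it suffices to prove the bound $|z(t)|^2+|z'(t)|^2\lesssim(1+\beta^2)^2\|g\|_{L^2([0,T])}^2$ with a constant independent of $\beta$.

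To obtain it I would first note that, integrating the equation for $z$ once and twice and using the vanishing data,
\begin{align*}
z'(t)&=\int_0^t\bigl(g(s)-\beta^2 a(s)z(s)-b(s)z(s)\bigr)\,ds,\\
z(t)&=\int_0^t(t-s)\bigl(g(s)-\beta^2 a(s)z(s)-b(s)z(s)\bigr)\,ds,
\end{align*}
so everything is controlled once one has an a priori bound $\|z\|_{L^\infty([0,T])}\lesssim\|g\|_{L^2([0,T])}$. Indeed, feeding such a bound into the two identities and using $0\le\beta^2 a(t)+b(t)\le\beta^2\|a\|_{L^\infty}+\|b\|_{L^\infty}$ gives
\begin{equation*}
|z'(t)|\le\|g\|_{L^1([0,T])}+\bigl(\beta^2\|a\|_{L^\infty}+\|b\|_{L^\infty}\bigr)\|z\|_{L^1([0,T])}\lesssim(1+\beta^2)\|g\|_{L^2([0,T])},
\end{equation*}
and likewise $|z(t)|\lesssim(1+\beta^2)\|g\|_{L^2([0,T])}$; squaring yields the desired $(1+\beta^2)^2\|g\|_{L^2}^2$. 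The a priori $L^\infty$ bound itself comes from an energy estimate: multiply the $z$-equation by $\overline{z'}$, take real parts and integrate over $[0,t]$; thanks to $a,b\ge 0$ and $z(0)=z'(0)=0$ the energy $|z'|^2+|z|^2$ stays under control, and a Gronwall inequality (cleanest, in the strictly hyperbolic Cases 1--2, after passing to the Riemann invariants $z'\pm i\beta\sqrt{a}\,z$, whose moduli obey equations with purely imaginary $\beta$-coefficients that drop out of the energy identity; in Cases 3--4 one uses instead the quasi-symmetriser bookkeeping from the proof of Proposition \ref{odelem}) gives $\|z\|_{L^\infty([0,T])}\lesssim\|g\|_{L^2([0,T])}$.

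Combining the two parts,
\begin{equation*}
|w(t)|^2+|w'(t)|^2\lesssim|v(t)|^2+|v'(t)|^2+(1+\beta^2)^2\|g\|_{L^2([0,T])}^2,
\end{equation*}
which is the claimed estimate. I expect the only genuinely delicate point to be the a priori $L^\infty$ estimate for $z$: the Gronwall/energy argument must be organised so that the potential $\beta^2 a(t)$ does not contribute a factor beyond what is allowed, and this is exactly where the sign conditions $a\ge 0$, $b\ge 0$ and the symmetriser structure already exploited for Proposition \ref{odelem} enter; the remaining steps are routine manipulations of linear ordinary differential equations.
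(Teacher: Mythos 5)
Your first step coincides with the paper's: writing $w=v+z$, where $z$ solves \eqref{eqq12} with zero data, and reducing the claim to $|z(t)|^{2}+|z'(t)|^{2}\lesssim(1+\beta^{2})^{2}\|g\|^{2}_{L^{2}([0,T])}$; the two integral identities you then use are also fine. The genuine gap is the a priori bound $\|z\|_{L^{\infty}([0,T])}\lesssim\|g\|_{L^{2}([0,T])}$ \emph{with a constant independent of $\beta$}, which you assert but do not prove, and which is the entire content of the estimate. The argument you sketch cannot deliver it in Cases 2--4. Multiplying by $\overline{z'}$ and integrating requires differentiating $\beta^{2}a(t)|z(t)|^{2}$ in $t$, i.e.\ $a\in\mathrm{Lip}$, so it is only available in Case 1 (where it does work: one gets $|z'|^{2}+\beta^{2}a_{0}|z|^{2}\lesssim\|g\|^{2}_{L^{2}}$ and then $|z(t)|\le\int_{0}^{t}|z'|$). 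Running Gronwall instead on the naive energy $|z'|^{2}+|z|^{2}$ produces a factor $e^{C\beta^{2}T}$, which is not allowed. Your Riemann invariants $z'\pm i\beta\sqrt{a}\,z$ satisfy first-order equations whose coupling terms involve $\partial_{t}\sqrt{a}$ divided by $\sqrt a$, which does not exist for merely H\"older $a$ and degenerates when $a$ vanishes; regularising at scale $\epsilon$ (exactly as in the proof of Proposition \ref{odelem}) reintroduces terms of size $\epsilon^{\alpha-1}$, and optimising $\epsilon$ against $\beta$ yields Gronwall constants of the form $e^{KT\langle\beta\rangle^{1/s}}$, as in \eqref{odecase2}--\eqref{odecase3} --- not a $\beta$-independent constant. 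The same applies to the quasi-symmetriser in Cases 3--4. In other words, ``energy plus Gronwall, uniformly in $\beta$'' is precisely what fails for low-regularity or degenerate $a$ (this failure is why Proposition \ref{odelem} carries exponential factors and why the Euclidean problem is only Gevrey well-posed), so the crucial step of your proof is missing, and it is, if anything, a stronger claim than the corollary's own conclusion for $z$.

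The paper avoids any Gronwall argument for the inhomogeneous part: it writes the zero-data component by variation of parameters, $V_{II}(t)=\int_{0}^{t}e^{i\int_{s}^{t}(\beta^{2}A(\tau)+B(\tau))d\tau}G(s)\,ds$, and exploits the algebraic structure of $\tilde{M}=\int_{s}^{t}M(\tau)d\tau$, namely $\tilde{M}^{2}=(\tau m)I$, to compute $e^{i\tilde{M}}=\cos(\sqrt{\tau m})I+i\,\frac{\sin(\sqrt{\tau m})}{\sqrt{\tau m}}\tilde{M}$, whose norm is bounded by $1+\|\tilde M\|\lesssim 1+\beta^{2}$; Cauchy--Schwarz in $s$ then produces the $(1+\beta^{2})^{2}\|g\|^{2}_{L^{2}}$ term with no exponential in $\beta$ and without using any regularity of $a$ beyond boundedness. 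If you want to complete your proof you need a propagator bound of this Duhamel type that is explicitly polynomial in $\beta$; note that even this step is delicate rather than routine, since representing the solution of the non-autonomous system by the exponential of $\int_{s}^{t}M(\tau)d\tau$ presupposes commutativity of the matrices $M(\tau)$ at different times, which fails when $\beta^{2}a+b$ is non-constant, so a careful treatment (time-ordered exponential or a direct estimate of the fundamental solution) is required there as well.
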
 
\begin{proof}
	Similar to the proof of Proposition \ref{odelem}, we denote
	\begin{equation}
		W(t) := \left(
		\begin{matrix}
			i w(t) \\
			\partial_{t}w(t)
		\end{matrix}\right), 
		~~~~~ W_{0}:=\left(
		\begin{matrix}
			i {w_0}\\
			{ w_1}\end{matrix}\right),
	\end{equation} 
	and we will use the matrices 
	\begin{equation}
		A(t):=\left(\begin{matrix}
			0&1\\
			a(t)&0
		\end{matrix}\right),~~~
		B(t):= \left(\begin{matrix}
			0&1-\beta^{2}\\
			b(t)&0
		\end{matrix}\right)~~{\textrm{ and }}
		~~~G(t):=\left(\begin{matrix}
			0\\
			g(t)
		\end{matrix}\right).
	\end{equation}
	In this notation \eqref{eqq12}, becomes 
	\begin{equation}\label{eqq13}
		\left\{
		\begin{array}{ll}
			W^{\prime}=i\beta^{2} A(t)W(t)+iB(t)W(t)+G(t),\\
			W(0)=W_0.
		\end{array}\right.
	\end{equation}
	Consequently, we consider the following two equations
	\begin{equation}\label{eqq14}\left\{
		\begin{array}{ll}
			V_{I}^{\prime}=i\left(\beta^{2} A(t)+B(t)\right)V_{I}(t),\\
			V_{I}(0)=W_0,
		\end{array}\right.
	\end{equation}
	and
	\begin{equation}\label{eqq15}\left\{
		\begin{array}{ll}
			V_{II}^{\prime}=i\left(\beta^{2} A(t)+B(t)\right)V_{II}(t)+G(t),\\
			V_{II}(0)=0.
		\end{array}\right.
	\end{equation}
	Then the solution of \eqref{eqq13}, is given by $W=V_I+V_{II}.$
	From the general theory of ordinary differential equations  we can say that if
	\begin{equation}
		\Phi(t;s) := \left(
		\begin{matrix}
			\phi_1(t;s) \\
			\phi_{2}(t;s)
		\end{matrix}\right), 
	\end{equation} 
	is  the solution of the equation
	\begin{equation}\label{Fsthode}\left\{
		\begin{array}{ll}
			\Phi^{\prime}(t,s)=i\left(\beta^{2} A(t)+B(t)\right)\Phi(t,s),\quad s\leq t,\\
			\Phi(s;s)=G(s),  
		\end{array}\right.
	\end{equation}
	then 
	$$\Phi(t,s)=e^{i\int_{s}^{t}\left(\beta^{2} A(\tau)+B(\tau)\right)d\tau}G(s).$$
	Then the solution of \eqref{eqq15} is given by 
	$V_{II}(t)=\int_{0}^{t}\Phi(t;s)ds$.
	Consequently, we have
	\begin{equation}\label{c0}
		W(t) = V_{I}+  \int_{0}^{t}e^{i\int_{s}^{t}\left(\beta^{2} A(\tau)+B(\tau)\right)d\tau} G(s)ds.
	\end{equation}
	Here, on the right hand side, we will estimate the matrix term $e^{i\int_{s}^{t}\left(\beta^{2} A(\tau)+B(\tau)\right)d\tau}$. Since  $A(t)=\left(\begin{matrix}
		0&1\\
		a(t)&0
	\end{matrix}\right)$ and $B(t)= \left(\begin{matrix}
		0&1-\beta^2\\
		b(t)&0
	\end{matrix}\right)$, this implies that  $$M(t):=\beta^{2} A(t)+B(t)=\left(\begin{matrix}
		0&1\\
		\beta^{2}a(t)+b(t)&0
	\end{matrix}\right).$$ 
	This gives $e^{i\int_{s}^{t}\left(\beta^{2} A(\tau)+B(\tau)\right)d\tau}=e^{i\tilde{M}}$ where denoting $\tilde{M}:=\int_{s}^{t}M(\tau)d\tau$, we can abbreviate writing
	$\tilde{M}= \left( \begin{matrix}
		0&\tau\\
		m&0
	\end{matrix}\right),$ where $\tau=t-s>0$ and $m=\int\limits_{s}^{t} (\beta^{2}a(\tau)+b(\tau)) d\tau\geq 0,$ since $a(t), b(t)\geq 0$ for all $t\in [0,T]$. For now we will consider the case $m\neq 0$. 	It can be readily seen from the matrix multiplication  and induction that
	$$\tilde{M}^{2k}=(\tau m)^k I$$
	and $$\tilde{M}^{2k+1}=(\tau m)^{k}\tilde{M},$$ for $k=0,1,2,....$
	Then we have
	\begin{eqnarray}\label{estref1}
		e^{i\tilde{M}}&=&\sum_{n=0}^{\infty} \frac{(i\tilde{M})^n}{n!}\nonumber\\
		&=& \sum_{ k=0}^{\infty}\frac{(i\tilde{M})^{2k}}{(2k)!}+ \sum_{k=0}^{\infty}\frac{(i\tilde{M})^{2k+1}}{(2k+1)!}\nonumber\\
		&=& \sum_{ k=0}^{\infty}\frac{i^{2k}(\tau m)^{k}}{(2k)!}I + \sum_{ k=0}^{\infty}\frac{i^{2k+1}(\tau m)^{k}}{(2k+1)!}\tilde{M}\nonumber\\
		&=& \sum_{ k=0}^{\infty}\frac{(-1)^{k}(\sqrt{\tau m})^{2k}}{(2k)!}I+ \frac{i}{\sqrt {\tau m}}\sum_{ k=0}^{\infty}\frac{(-1)^{k}(\sqrt {\tau m})^{2k+1}}{(2k+1)!}\tilde{M} \nonumber\\
		&=& \cos(\sqrt{\tau m})I+\frac{i}{\sqrt {\tau m}}\sin({\sqrt{\tau m}})\tilde{M}.
	\end{eqnarray}
	Simple calculations will give us 
	\begin{equation}\label{c12}
		\|\tilde{M}\|\leq T(1+\beta^{2}\|a\|_{L^{\infty}([0,T])}+\|b\|_{L^{\infty}([0,T])}).
	\end{equation}
	From \eqref{estref1} and \eqref{c12}, we have
	\begin{equation}\label{es1i}
		\|e^{i\int_{s}^{t}\left(\beta^{2} A(\tau)+B(\tau)\right)d\tau}\|\leq 1+T(1+\beta^{2}\|a\|_{L^{\infty}([0,T])}+\|b\|_{L^{\infty}([0,T])}).
	\end{equation}
	Now, when $m=0$, we have 
	$\tilde{M}=\left(\begin{matrix}
		0&\tau\\
		0&0
	\end{matrix}\right)$ and $e^{i\tilde{M}}=I+i\tau \left(\begin{matrix} 0&1\\
		0&0\end{matrix}\right),$
	so that in this case we have
	\begin{equation}\label{es1ii}
		\|e^{i\int_{s}^{t}\left(\beta^{2} A(\tau)+B(\tau)\right)d\tau}\|\leq 1+T.
	\end{equation}
	Thus from \eqref{es1i} and \eqref{es1ii}, we get
	\begin{equation}\label{estM}
		\|e^{i\int_{s}^{t}\left(\beta^{2} A(\tau)+B(\tau)\right)d\tau}\|\leq C(1+\beta^{2}),
	\end{equation}
	where $C=\max\{1+T+T\|b\|_{L^{\infty}([0,T])},T\|a\|_{L^{\infty}([0,T])}\}.$ Combining \eqref{c0} and \eqref{estM}, we get
	\begin{eqnarray}
		|W(t)|^2&\leq& 2|V_{I}(t)|^{2}+2C^{2}(1+\beta^{2})^{2}\|g\|^2_{L^2([0,T])}\nonumber\\
		&\lesssim& |V_{I}(t)|^{2}+(1+\beta^{2})^{2}\|g\|^2_{L^2([0,T])},
	\end{eqnarray}
	for all $t\in[0,T]$, with the constant independent of $\beta$. This completes the proof.
\end{proof}

\section{Proof of the main result}
\label{SEC:proof}

We now proceed to prove Theorem \ref{maintheo}. We first recall a few facts concerning the Fourier analysis on the lattice $\hbar\Zn$.
The Schwartz space $\mathcal{S}\left(\hbar\mathbb{Z}^{n}\right)$ on the lattice $\hbar\mathbb{Z}^{n}$ is the space of rapidly decreasing functions $u: \hbar\mathbb{Z}^{n} \rightarrow \mathbb{C}$, that is, $u \in \mathcal{S}\left(\hbar\mathbb{Z}^{n}\right)$ if for any $L<\infty$ there exists a constant $C_{u, L,\hbar}$ such
that
\begin{equation}
	|u(k)| \leq C_{u, L,\hbar}(1+|k|)^{-L}, \quad \text { for all } k \in \hbar\mathbb{Z}^{n} ,
\end{equation}
where $|k|=\hbar\left(\sum\limits_{j=1}^{n}k^2_j\right)^{1/2}$. The Fourier transform  $\widehat{u}$ of $u:\hbar\mathbb{Z}^{n}\to\mathbb C$ is defined as 
\begin{equation}\label{EQ:ftlat}
\widehat{u}(\theta)=\sum_{k\in \hbar\mathbb{Z}^{n}}u(k)e^{-2\pi\frac{ i}{\hbar}k\cdot \theta},~~~\theta\in\mathbb{T}^{n},~k\in \hbar\mathbb{Z}^{n}.
\end{equation} 
The Plancherel formula takes the form
\begin{eqnarray}\label{EQ:Planch}
\int_{\mathbb{T}^{n}}\left|\widehat{u}(\theta)\right|^{2}d\theta &=& \int_{\mathbb{T}^{n}}\widehat{u}(\theta)\overline{\widehat{u}(\theta)}d\theta\nonumber\\
&=&\int_{\mathbb{T}^{n}}\sum_{k\in \hbar \mathbb{Z}^{n}}u(k)e^{-\frac{2\pi i}{h}k\cdot\theta}\sum_{l\in \hbar \mathbb{Z}^{n}}\overline{u(l)}e^{\frac{2\pi i}{h}l\cdot\theta}d\theta\nonumber\\
&=& \sum_{k\in \hbar \mathbb{Z}^{n}}\left|u(k)\right|^{2}.
\end{eqnarray}
The scalar product in the Hilbert space $\l2h$ is given by
$$
(u,v)_{l^{2}(\hbar\mathbb{Z}^{n})}=\sum_{k\in \hbar\mathbb{Z}^{n}}u(k)\overline{v(k)}.
$$

We note that the symbol of $\mathcal{L}_{\hbar}$ defined by 
$\sigma_{\mathcal{L}_{\hbar}}(k,\theta)=e^{-2\pi\frac{ i}{\hbar}k\cdot \theta}
\mathcal{L}_{\hbar}(e^{2\pi\frac{ i}{\hbar}k\cdot \theta})$
is given by
\begin{equation}\label{EQ:Lhsymb}
\sigma_{\mathcal{L}_{\hbar}}(k,\theta)=\sum_{j=1}^{n}\left(e^{2\pi i \theta_j}+e^{-2\pi i \theta_j}\right)-2n=2\sum_{j=1}^{n}\cos(2\pi\theta_j)-2n,
\end{equation} 
with $(k,\theta)\in\hbar\mathbb Z^n\times\Tn$. 
We also note that it is independent of $k$ and $\hbar$.

\begin{proof}[Proof of Theorem \ref{maintheo}] 
Our aim is to reduce the Cauchy problem  \eqref{pde} to a form
allowing us to apply Corollary \ref{eqq11}. In order to do this, we take the Fourier
transform of \eqref{pde} with respect to $k\in{\hbar\mathbb{Z}^n}$, which gives
\begin{equation}\label{Feqn}
\partial^{2}_{t}\widehat{u}(t,\theta)-\hbar ^{-2}a(t)\sigma_{\mathcal{L}_{\hbar}}(k,\theta)\widehat{u}(t,\theta)+b(t)\widehat{u}(t,\theta)=\widehat{f}(t, \theta),\quad \theta\in\mathbb{T}^{n}.\end{equation}
Formally recalling the notation used in Proposition \ref{odelem} and Corollary \ref{eqq11}, we write
\begin{equation}\label{EQ:not1}
v(t):=\widehat{u}(t,\theta),\quad
\beta^{2}:= -\hbar^{-2}\sigma_{\mathcal{L}_{\hbar}}(k,\theta)=\hbar ^{-2}\left(2n-2\sum_{j=1}^{n}\cos(2\pi\theta_j)\right),
\end{equation} 
as well as
\begin{equation}\label{EQ:not2}
v_{0}:=\widehat{u}_{0}(\theta),\quad v_{1}:=\widehat{u}_{1}(\theta),\quad g(t):=\widehat f(t,\theta).\end{equation}
Therefore, equation \eqref{Feqn} becomes
$$v^{\prime\prime}(t)+\beta^{2}a(t)v(t)+b(t)v(t)=g(t),\quad t\in[0,T]$$
with $\beta=\beta_{\hbar,\theta}$ and all other functions depending on $\theta$ as a parameter.
We proceed by discussing implications of Corollary \ref{eqq11} separately in each case.

\medskip
\textbf{Case 1:}~${a\in \textbf{Lip}([0,T]),~ a(t)\geq a_0>0.}$

\smallskip
Applying Corollary \ref{eqq11} and inequality \eqref{EQ:ODE1} in Proposition \ref{odelem}, we get
\begin{equation}\label{cc1:eq1}
|v(t)|^{2}+|v^{\prime}(t)|^{2}\lesssim
  C_{1}e^{K_{1}T\langle\beta\rangle^{2}}( |v_0|^2+|v_1|^2)+(1+\beta^2)^{2}\|g\|^2_{L^2([0,T])}.
\end{equation} 

 Recalling the notation \eqref{EQ:not1} and \eqref{EQ:not2}, in \eqref{cc1:eq1}, we get 
\begin{multline}\label{mt:c1}
	|\widehat{u}(t,\theta)|^{2}+|\partial_{t}\widehat{u}(t,\theta)|^{2}\lesssim  C_{1}e^{K_{1}T\left(1-\hbar^{-2}\sigma_{\mathcal{L}_{\hbar}}(k,\theta)\right)} (|\widehat{u}_0(\theta)|^{2}+|\widehat{u}_1(\theta)|^{2})+\\
	(1-\hbar^{-2}\sigma_{\mathcal{L}_{\hbar}}(k,\theta))^{2}\|\widehat{f}(\cdot,\theta)\|^2_{L^2([0,T])}.
\end{multline}
Now, by using the Plancherel formula \eqref{EQ:Planch} and Fourier transform, we get
\begin{multline}
\|u(t)\|^{2}_{\l2h}+\|\partial_{t}{u}(t)\|^{2}_{\l2h}
\lesssim  C_{1}\left(\|e^{B_{1}T\left(I-\hbar^{-2}\mathcal{L}_{\hbar}\right)}u_0\|^{2}_{\l2h}\right.+\\
\left.\|e^{B_{1}T\left(I-\hbar^{-2}\mathcal{L}_{\hbar}\right)}{u}_1\|^{2}_{\l2h}\right)+
\|(I-\hbar^{-2}\mathcal{L}_{\hbar})f\|^2_{L^2([0,T];\l2h)},
\end{multline}
where $B_{1}=\frac{K_{1}}{2}>0.$
We also note that the symbol  $\sigma_{\mathcal{L}_{\hbar}}$ is of order zero, therefore  $I-\hbar^{-2}\mathcal{L}_{\hbar}$ and $e^{B_{1}T\left(I-\hbar^{-2}\mathcal{L}_{\hbar}\right)}$ are bounded pseudo-difference  operators on $\ell^{2}(\hbar\mathbb{Z}^{n})$,  associated with the symbols $1-\hbar^{-2}\sigma_{\mathcal{L}_{\hbar}}$ and $e^{B_{1}T\left(1-\hbar^{-2}\sigma_{\mathcal{L}_{\hbar}}\right)}$, respectively. Therefore
\begin{eqnarray}
	\|u(t)\|^{2}_{\l2h}+\|\partial_{t}{u}(t)\|^{2}_{\l2h}
	&\lesssim&  C_{1}\|e^{B_{1}T\left(I-\hbar^{-2}\mathcal{L}_{\hbar}\right)}\|^{2}\left(\|u_0\|^{2}_{\l2h}+\|u_{1}\|^{2}_{\l2h}\right)+\nonumber\\
	&&\|I-\hbar^{-2}\mathcal{L}_{\hbar}\|^{2}\|f\|^2_{L^2([0,T];\l2h)}\nonumber\\
	&\leq&C_{\hbar,T}\left(\|u_0\|^{2}_{\l2h}+\|u_{1}\|^{2}_{\l2h}+\|f\|^2_{L^2([0,T];\l2h)}\right),\nonumber\\
\end{eqnarray}
where $C_{\hbar,T}=\max\{C_{1}\|e^{B_{1}T\left(I-\hbar^{-2}\mathcal{L}_{\hbar}\right)}\|^{2}_{\mathcal{L}(\ell^{2}(\hbar\mathbb{Z}^{n}))},\|I-\hbar^{-2}\mathcal{L}_{\hbar}\|^{2}_{\mathcal{L}(\ell^{2}(\hbar\mathbb{Z}^{n}))}\}$.\\

\medskip
\textbf{Case 2:} ${a\in \mathcal{C}^{\alpha}([0,T])}$, with ${0<\alpha<1,\; a_0:=\min_{[0,T]} a(t)>0.}$
\smallskip

The application of Corollary \ref{eqq11} and inequality \eqref{odecase2} in Proposition \ref{odelem}, gives
\begin{equation}\label{cc2:eq1}
|v(t)|^{2}+|v^{\prime}(t)|^{2}\lesssim
C_{2}e^{K_{2}T\langle\beta\rangle^{\frac{1}{s}}}( |v_0|^2+|v_1|^2)+(1+\beta^2)^{2}\|g\|^2_{L^2([0,T])},
\end{equation}
for $0<s\leq \frac{1}{2}$.
Using \eqref{EQ:not1} and \eqref{EQ:not2} in \eqref{cc2:eq1}, and similar to previous case, we get
\begin{eqnarray}
\|u(t)\|^{2}_{\l2h}+\|\partial_{t}{u}(t)\|^{2}_{\l2h}
&\lesssim&  C_{2}\|e^{B_{2}T\left(I-\hbar^{-2}\mathcal{L}_{\hbar}\right)^{\frac{1}{2s}}}\|^{2}\left(\|u_0\|^{2}_{\l2h}+\|u_{1}\|^{2}_{\l2h}\right)\nonumber\\
&&+\|I-\hbar^{-2}\mathcal{L}_{\hbar}\|^{2}\|f\|^2_{L^2([0,T];\l2h)}\nonumber\\
&\leq& C_{\hbar,T}\left(\|u_0\|^{2}_{\l2h}+\|u_{1}\|^{2}_{\l2h}+\|f\|^2_{L^2([0,T];\l2h)}\right),\nonumber\\
\end{eqnarray}
where $C_{\hbar,T}=\max\{C_{2}\|e^{B_{2}T\left(I-\hbar^{-2}\mathcal{L}_{\hbar}\right)^{\frac{1}{2s}}}\|^{2}_{\mathcal{L}(\ell^{2}(\hbar\mathbb{Z}^{n}))},\|I-\hbar^{-2}\mathcal{L}_{\hbar}\|^{2}_{\mathcal{L}(\ell^{2}(\hbar\mathbb{Z}^{n}))}\}$.\\

\medskip
\textbf{Case 3:} $a\in\mathcal{C}^{l}([0,T]),$ with $l\geq 2,$ $a(t)\geq 0.$
\smallskip

Similar to the above, from Corollary \ref{eqq11} and inequality \eqref{odecase3} in Proposition \ref{odelem},   we get
\begin{equation}\label{cc3:eq0}
|v(t)|^{2}+|v'(t)|^{2}\lesssim C_{3} e^{K_{3}\langle\beta\rangle^{6-\frac{4}{\sigma}}}(|v_{0}|^{2}+|v_{1}|^{2})+
C(1+\beta^2)^{2}\|g\|^2_{L^2([0,T])}
\end{equation} 
for $\sigma=1+\frac{l}{2}.$
At the same time, recalling the notation \eqref{EQ:not1} and \eqref{EQ:not2}, in \eqref{cc3:eq0}, and similar to Case 1, we get 
\begin{eqnarray}
\|u(t)\|^{2}_{\l2h}+\|\partial_{t}{u}(t)\|^{2}_{\l2h}
&\lesssim&  C_{3}^{'}\|e^{B_{3}\left(I-\hbar^{-2}\mathcal{L}_{\hbar}\right)^{3-\frac{2}{\sigma}}}\|^{2}\left(\|u_0\|^{2}_{\l2h}+\|u_{1}\|^{2}_{\l2h}\right)+\nonumber\\
&&\|I-\hbar^{-2}\mathcal{L}_{\hbar}\|^{2}\|f\|^2_{L^2([0,T];\l2h)}\nonumber\\
&\leq& C_{\hbar,T}\left(\|u_0\|^{2}_{\l2h}+\|u_{1}\|^{2}_{\l2h}+\|f\|^2_{L^2([0,T];\l2h)}\right),\nonumber\\
\end{eqnarray}
where $C_{\hbar,T}=\max\{C_{3}^{'}\|e^{B_{3}\left(I-\hbar^{-2}\mathcal{L}_{\hbar}\right)^{3-\frac{2}{\sigma}}}\|^{2}_{\mathcal{L}(\ell^{2}(\hbar\mathbb{Z}^{n}))},\|I-\hbar^{-2}\mathcal{L}_{\hbar}\|^{2}_{\mathcal{L}(\ell^{2}(\hbar\mathbb{Z}^{n}))}\}$.\\

\medskip
\textbf{Case 4:} $a\in \mathcal{C}^{\alpha}([0,T])$, with $0<\alpha<2$, $a(t)\geq 0.$
\smallskip

Similar to the previous cases from Corollary \ref{eqq11} and inequality \eqref{odecase4} in Proposition \ref{odelem}, we get
\begin{equation}\label{cc3:eq1}
|v(t)|^{2}+|v^{\prime}(t)|^{2}\lesssim
C_{4}e^{K_{4}T\langle\beta\rangle^{\frac{1}{s}}}( |v_0|^2+|v_1|^2)+C(1+\beta^2)^{2}\|g\|^2_{L^2([0,T])},
\end{equation} 
for $0<s\leq \frac{1}{2}+\frac{\alpha}{2}.$
At the same time, recalling the notation \eqref{EQ:not1} and \eqref{EQ:not2} in \eqref{cc3:eq1}, then similar to the Case 1, we get
\begin{eqnarray}
\|u(t)\|^{2}_{\l2h}+\|\partial_{t}{u}(t)\|^{2}_{\l2h}
&\lesssim&  C_{4}\|e^{B_{4}T\left(I-\hbar^{-2}\mathcal{L}_{\hbar}\right)^{\frac{1}{2s}}}\|^{2}\left(\|u_0\|^{2}_{\l2h}+\|u_{1}\|^{2}_{\l2h}\right)+\nonumber\\
&&\|I-\hbar^{-2}\mathcal{L}_{\hbar}\|^{2}\|f\|^2_{L^2([0,T];\l2h)}\nonumber\\
&\leq& C_{\hbar,T}\left(\|u_0\|^{2}_{\l2h}+\|u_{1}\|^{2}_{\l2h}+\|f\|^2_{L^2([0,T];\l2h)}\right),\nonumber\\
\end{eqnarray}
where $C_{\hbar,T}=\max\{C_{4}\|e^{B_{4}T\left(I-\hbar^{-2}\mathcal{L}_{\hbar}\right)^{\frac{1}{2s}}}\|^{2}_{\mathcal{L}(\ell^{2}(\hbar\mathbb{Z}^{n}))},\|I-\hbar^{-2}\mathcal{L}_{\hbar}\|^{2}_{\mathcal{L}(\ell^{2}(\hbar\mathbb{Z}^{n}))}\}$.  This proves \eqref{EQ:est0} in Theorem \ref{maintheo}. We will consider \eqref{EQ:conv} in the next section.
\end{proof}

\section{Limit as $\hbar\to 0$}
\label{SEC:limit}

In this section, we compare the solutions of \eqref{pde} on $\hbar\Zn$ as $\hbar\to 0$, with the solutions of \eqref{EQ:wern} on $\Rn$. We now proceed to prove Theorem \ref{limthm}.
\begin{proof}[Proof of Theorem \ref{limthm}]
Consider two Cauchy problems:
\begin{equation}\label{CP1}
	\left\{
	\begin{array}{ll}
		\partial^{2}_{t}u(t,k)-a(t)\hbar^{-2}\mathcal{L}_{\hbar}u(t,k)+b(t)u(t,k)=g(t,k), \quad k\in \hbar\mathbb{Z}^n,\\
		u(0,k)=u_{0}(k),\\
		\partial_{t}u(0,k)=u_1(k),
	\end{array}
	\right.
\end{equation}
and
\begin{equation}\label{CP2}
	\left\{
	\begin{array}{ll}
		\partial^{2}_{t}v(t,x)-a(t)\mathcal{L}v(t,x)+b(t)v(t,x)=g(t,x), \quad x\in\mathbb{R}^n,\\
		v(0,x)=u_{0}(x),\\
		\partial_{t}v(0,x)=u_1(x),
	\end{array}
	\right.
\end{equation}
where  $\mathcal{L}$ is the Laplacian on $\Rn$.
From the equations \eqref{CP1} and \eqref{CP2}, 
denoting $w:=u-v$,  we get
\begin{equation}\label{CPF}
	\left\{
	\begin{array}{ll}
		\partial^{2}_{t}w(t,k)-a(t)\hbar^{-2}\mathcal{L}_{\hbar}w(t,k)+b(t)w(t,k)=a(t)\left(\hbar^{-2}\mathcal{L}_{\hbar}-\mathcal{L}\right)v(t,k),\quad k\in\hbar\mathbb{Z}^{n},\\
		w(0,k)=0,\\
		\partial_{t}w(0,k)=0.               
	\end{array}
	\right.
\end{equation}
Our aim is to reduce the Cauchy problem  \eqref{CPF} to a form
allowing us to apply Corollary \ref{eqq11}. In order to do this, we take the Fourier
transform of \eqref{CPF} with respect to $k\in{\hbar\mathbb{Z}^n}$. This gives
\begin{equation}\label{weq}
	\partial^{2}_{t}\widehat{w}(t,\theta)-\hbar ^{-2}a(t)\sigma_{\mathcal{L}_{\hbar}}(k,\theta)\widehat{w}(t,\theta)+b(t)\widehat{w}(t,\theta)=\widehat{f}(t, \theta),\quad \theta\in\mathbb{T}^{n},\end{equation}
where $f(t,k):=a(t)\left(\hbar^{-2}\mathcal{L}_{\hbar}-\mathcal{L}\right)v(t,k)$, and $\sigma_{\mathcal{L}_{\hbar}}(k,\theta)=\sigma_{\mathcal{L}_{\hbar}}(\theta)$ is independent of $k\in\hbar\mathbb{Z}^{n}$. Since $w_{0}=w_{1}=0$, from Proposition \ref{odelem}, it follows that the solution of corresponding homogeneous equation of \eqref{CPF} is identically zero. Now applying Corollary \ref{eqq11} in \eqref{weq}, we get 
\begin{equation}
	|\widehat{w}(t,\theta)|^{2}+|\partial_{t}\widehat{w}(t,\theta)|^{2}\lesssim  (1-\hbar^{-2}\sigma_{\mathcal{L}_{\hbar}}(k,\theta))^{2}\|\widehat{f}(\cdot,\theta)\|^2_{L^2([0,T])}.
\end{equation}
Similar to the proof of \eqref{EQ:est0}, by recalling  Plancherel formula \eqref{EQ:Planch} and Fourier transform, we have
\begin{multline}\label{WP11}
	\left\|w(t)\right\|^{2}_{\l2h} +\left\|\partial_t w(t)\right\|^{2}_{\l2h}
	\leq C\left\|\left(I-\hbar^{-2}\mathcal{L}_{\hbar}\right)a(\hbar^{-2}\mathcal{L}_{\hbar}-\mathcal{L})v\right\|^{2}_{L^{2}([0,T];\ell^{2}(\hbar\mathbb{Z}^{n}))}\\
	\leq C\left\|a\right\|^{2}_{L^{\infty}([0,T])}\sup\limits_{t\in[0,T]}\left\|\left(I-\hbar^{-2}\mathcal{L}_{\hbar}\right)(\hbar^{-2}\mathcal{L}_{\hbar}-\mathcal{L})v(t,\cdot)\right\|^{2}_{\ell^{2}(\hbar\mathbb{Z}^{n})},
\end{multline}
where $C>0$ is independent of $\hbar$.

Now we will estimate the term $\left\|\left(I-\hbar^{-2}\mathcal{L}_{\hbar}\right)(\hbar^{-2}\mathcal{L}_{\hbar}-\mathcal{L})v(t,\cdot)\right\|^{2}_{\ell^{2}(\hbar\mathbb{Z}^{n})}$.
Let $\phi\in C^{4}(\mathbb{R}^{n})$, then by Taylor's theorem with the Lagrange's form of the remainder, we have
\begin{equation}\label{tylr}
	\phi(\xi+\mathbf{h})=\sum_{|\alpha|\leq 3} \frac{\partial^{\alpha} \phi(\xi)}{\alpha !} \mathbf{h}^{\alpha}+\sum_{|\alpha|=4} \frac{\partial^{\alpha} \phi(\xi+\theta_{\xi}\mathbf{h})}{\alpha !} \mathbf{h}^{\alpha},
\end{equation} 
for some $\theta_{\xi}\in (0,1)$ depending on $\xi$.  Let $v_j$ be the $j^{th}$ basis vector in $\Zn$, having all zeros except for $1$ as the $j^{th}$ component and then by taking  $\mathbf{h}=v_{j}$ and $-v_{j}$ in \eqref{tylr}, we have
\begin{eqnarray}\label{d1}
	\phi(\xi+v_{j})=\phi(\xi)+\phi^{(v_{j})}(\xi)+\frac{1}{2!}\phi^{(2v_{j})}(\xi)+\frac{1}{3!}\phi^{(3v_{j})}(\xi)+\frac{1}{4!}\phi^{(4v_{j})}(\xi+\theta_{j,\xi} v_{j}),
\end{eqnarray} and
\begin{eqnarray}\label{d2}
	\phi(\xi-v_{j})=\phi(\xi)-\phi^{(v_{j})}(\xi)+\frac{1}{2!}\phi^{(2v_{j})}(\xi)-\frac{1}{3!}\phi^{(3v_{j})}(\xi)+\frac{1}{4!}\phi^{(4v_{j})}(\xi-\tilde{\theta}_{j,\xi} v_{j}),\end{eqnarray}
for some $\theta_{j,\xi},\tilde{\theta}_{j,\xi}\in(0,1)$. Using \eqref{d1} and \eqref{d2}, we have
\begin{equation*}
	\phi(\xi+v_{j})+\phi(\xi-v_{j})-2\phi(\xi)=\phi^{(2v_{j})}(\xi)+\frac{1}{4!}\left(\phi^{(4v_{j})}(\xi+\theta_{j,\xi} v_{j})+\phi^{(4v_{j})}(\xi-\tilde{\theta}_{j,\xi} v_{j})\right).
\end{equation*}
Since $\delta_{\xi_{j}}^{2}\phi(\xi)=\phi(\xi+v_{j})+\phi(\xi-v_{j})-2\phi(\xi)$, where $\delta_{\xi_{j}}\phi(\xi):=\phi(\xi+\frac{1}{2}v_{j})-\phi(\xi-\frac{1}{2}v_{j}),$ is the usual central difference operator, it follows that
\begin{eqnarray}\label{d3}
	\delta_{\xi_{j}}^{2}\phi(\xi)&=&\phi^{(2v_{j})}(\xi)+\frac{1}{4!}\left(\phi^{(4v_{j})}(\xi+\theta_{j,\xi} v_{j})+\phi^{(4v_{j})}(\xi-\tilde{\theta}_{j,\xi}v_{j})\right).
\end{eqnarray}
Now by adding all the above $n$-equations for $j=1,\dots,n$, we get
\begin{eqnarray}\label{d3}
	\sum\limits_{j=1}^{n}\delta_{\xi_{j}}^{2}\phi(\xi)=\sum\limits_{j=1}^{n}\phi^{(2v_{j})}(\xi)+\frac{1}{4!}\sum\limits_{j=1}^{n}\left(\phi^{(4v_{j})}(\xi+\theta_{j,\xi} v_{j})+\phi^{(4v_{j})}(\xi-\tilde{\theta}_{j,\xi} v_{j})\right).
\end{eqnarray}
Let us define a function $V_{\theta_{j}v_{j}}\phi:\mathbb{R}^{n}\to \mathbb{R}$  by $V_{\theta_{j}v_{j}}\phi(\xi):=\phi(\xi-\theta_{j,\xi}v_{j}),$ then we get
\begin{eqnarray}
	\sum\limits_{j=1}^{n}\delta_{\xi_{j}}^{2}\phi(\xi)-\sum\limits_{j=1}^{n}\frac{\partial^{2}}{\partial\xi_{j}^{2}}\phi(\xi)=\frac{1}{4!}\sum\limits_{j=1}^{n}\left(V_{-\theta_{j}v_{j}}\phi^{(4v_{j})}(\xi)+V_{\tilde{\theta}_{j}v_{j}}\phi^{(4v_{j})}(\xi)\right).
\end{eqnarray}
Now we extend this to $\hbar\Zn$. Consider a function  $\phi_{\hbar}:\mathbb{R}^{n}\to \mathbb{R}$  defined by
$\phi_{\hbar}(\xi):=\phi(\hbar\xi)$. Clearly  $\phi_{\hbar}\in C^{4}(\mathbb{R}^{n})$, if we take $\phi\in C^{4}(\mathbb{R}^{n})$. Now we have
\begin{eqnarray}\label{eqhzn}
	\mathcal{L}_{1}\phi_{\hbar}(\xi)-\mathcal{L}\phi_{\hbar}(\xi)=\frac{1}{4!}\sum\limits_{j=1}^{n}\left(V_{-\theta_{j}v_{j}}\phi_{\hbar}^{(4v_{j})}(\xi)+V_{\tilde{\theta}_{j}v_{j}}\phi_{\hbar}^{(4v_{j})}(\xi)\right),
\end{eqnarray}
where $\mathcal{L}$ is the Laplacian on $\Rn$ and $\mathcal{L}_{1}$ is the discrete difference Laplacian on $\Zn$.  One can quickly notice that 
\begin{equation}
	V_{-\theta_{j}v_{j}}\phi_{\hbar}^{(4v_{j})}(\xi)=\phi_{\hbar}^{(4v_{j})}(\xi+\theta_{j,\xi} v_{j})=\hbar^{4}\phi^{(4v_{j})}(\hbar\xi+\hbar\theta_{j,\xi} v_{j})=\hbar^{4}V_{-\hbar\theta_{j}v_{j}}\phi^{(4 v_{j})}(\hbar\xi).
\end{equation}
Therefore, the equation \eqref{eqhzn} becomes
\begin{eqnarray}
	\left(\mathcal{L}_{\hbar}-\hbar^{2}\mathcal{L}\right)\phi(\hbar\xi)=\frac{\hbar^{4}}{4!}\sum\limits_{j=1}^{n}\left(V_{-\hbar\theta_{j}v_{j}}\phi^{(4v_{j})}(\hbar\xi)+V_{\hbar\tilde{\theta}_{j}v_{j}}\phi^{(4v_{j})}(\hbar\xi)\right),
\end{eqnarray}
whence we get
\begin{multline}
	\left(I-\hbar^{-2}\mathcal{L}_{\hbar}\right)\left(\mathcal{L}_{\hbar}-\hbar^{2}\mathcal{L}\right)\phi(\hbar\xi)=\frac{\hbar^{4}}{4!}\sum\limits_{j=1}^{n}\left(\left(I-\hbar^{-2}\mathcal{L}_{\hbar}\right)V_{-\hbar\theta_{j}v_{j}}\phi^{(4v_{j})}(\hbar\xi)\right.+\\
	\left.\left(I-\hbar^{-2}\mathcal{L}_{\hbar}\right)V_{\hbar\tilde{\theta}_{j}v_{j}}\phi^{(4v_{j})}(\hbar\xi)\right).
\end{multline}
Hence, it follows that
\begin{multline}\label{EQ:convh}
	\left\|\left(I-\hbar^{-2}\mathcal{L}_{\hbar}\right)(\hbar^{-2}\mathcal{L}_{\hbar}-\mathcal{L})\phi\right\|^{2}_{\ell^{2}(\hbar\mathbb{Z}^{n})}\lesssim\hbar^{4}\left[\max_{1\leq j\leq n}\left\|\left(I-\hbar^{-2}\mathcal{L}_{\hbar}\right)V_{-\hbar\theta_{j}v_{j}}\phi^{(4v_{j})}\right\|^{2}_{\ell^{2}(\hbar\mathbb{Z}^{n})}\right.\\
	+\left.\max_{1\leq j\leq n}\left\|\left(I-\hbar^{-2}\mathcal{L}_{\hbar}\right)V_{\hbar\tilde{\theta}_{j}v_{j}}\phi^{(4v_{j})}\right\|^{2}_{\ell^{2}(\hbar\mathbb{Z}^{n})}\right].
\end{multline}
Since in Case 1, $v(t,\cdot)\in H^{s+1}(\Rn)\cap H^{s}(\Rn)= H^{s+1}(\Rn)$, by Sobolev embedding theorem $\left(\right.$see e.g.\cite[Excercise 2.6.17]{Ruzhansky-Turunen:BOOK}$\left.\right)$, we have 
\begin{equation}
	s>k+\frac{n}{2}\implies H^{s}(\Rn)\subseteq C^{k}(\Rn).
\end{equation}
Thus for $s>3+\dfrac{n}{2},$ we have $H^{s+1}(\Rn)\subseteq C^{4}(\Rn)$, and if $v(t,\cdot)\in H^{s+1}(\Rn)$, then $v^{(4v_{j})}(t,\cdot)\in H^{s-3}(\Rn) \subseteq H^{2}(\Rn)$ whenever   $s\geq 5$,  which gives
\begin{equation}\label{norm}
	\lim\limits_{\hbar \to 0}\left\|\left(I-\hbar^{-2}\mathcal{L}_{\hbar}\right)V_{-\hbar\theta_{j}v_{j}}v^{(4v_{j})}(t,\cdot)\right\|^{2}_{\ell^{2}(\hbar\mathbb{Z}^{n})}=	\left\|(I-\mathcal{L})v^{(4v_{j})}(t,\cdot)\right\|^{2}_{L^{2}(\mathbb{R}^{n})}<\infty.
\end{equation}
Combining the above assumptions, we can choose $s\geq 5$ for $n\leq3$ and $s>3+\frac{n}{2}$ for $n\geq 4$. On the other hand for Case 2, 3 and 4, we have $v(t,\cdot)\in \gamma^{s}(\mathbb{R}^{n})\cap H^{p+1}(\mathbb{R}^{n})\subseteq C^{\infty}(\mathbb{R}^{n})$, under the assumptions \eqref{EQ:rn1}-\eqref{EQ:rn3} and $p\geq 5$. Now from  \eqref{EQ:convh}, it follows that
\begin{multline}\label{EQ:cht}
	\left\|\left(I-\hbar^{-2}\mathcal{L}_{\hbar}\right)(\mathcal{L}-\hbar^{-2}\mathcal{L}_{\hbar})v(t,\cdot)\right\|^{2}_{\l2h}\lesssim\\
	\hbar^{4}\left[\max_{1\leq j\leq n}\left\|\left(I-\hbar^{-2}\mathcal{L}_{\hbar}\right)V_{-\hbar\theta _{j}v_{j}}v^{(4v_{j})}(t,\cdot)\right\|^{2}_{\l2h}\right.+\\
	\left.\max_{1\leq j\leq n}\left\|\left(I-\hbar^{-2}\mathcal{L}_{\hbar}\right)V_{\hbar\tilde{\theta}_{j}v_{j}}v^{(4v_{j})}(t,\cdot)\right\|^{2}_{\l2h}\right].
\end{multline}
Using \eqref{WP11}, \eqref{norm} and \eqref{EQ:cht}, we get $\left\|w(t)\right\|^{2}_{\l2h}+\left\|\partial_t w(t)\right\|^{2}_{\l2h}\to 0$ as $\hbar\to 0.$ Hence $\left\|w(t)\right\|_{\l2h} \to 0$ and $\left\|\partial_t w(t)\right\|_{\l2h} \to 0$ as $\hbar \to 0$. This finishes the proof of Theorem \ref{limthm}.	
\end{proof}

\bibliographystyle{abbrv}

\end{document}